\newtheorem{thm}{Theorem}[section]
\newtheorem{cor}[thm]{Corollary}
\newtheorem{prop}[thm]{Proposition}
\theoremstyle{definition}
\newtheorem{defi}[thm]{Definition}
\theoremstyle{remark}
\newtheorem{rem}[thm]{Remark}
\numberwithin{equation}{section}
\definecolor{esperance}{rgb}{0.0,0.5,0.0}
\newcommand{\bu}{\mathbf{u}}
\newcommand{\bw}{\mathbf{w}}
\newcommand{\R}{\mathbb{R}}
\newcommand{\N}{\mathbb{N}}
\newcommand{\Z}{\mathbb{Z}}
\newcommand{\s}{\sigma}
\newcommand{\de}{\delta}
\DeclareMathOperator{\diam}{diam}
\newcommand{\del}{\delta}
\newcommand{\eps}{\epsilon}
\newcommand{\sig}{\sigma}
\newcommand{\cA}{\mathcal{A}}
\newcommand{\cB}{\mathcal{B}}
\newcommand{\cL}{\mathcal{L}}
\newcommand{\cP}{\mathcal{P}}
\newcommand{\bR}{\mathbb{R}}
\newcommand{\bZ}{\mathbb{Z}}
\newcommand{\bN}{\mathbb{N}}
\newcommand\set[1]{\left\{#1\right\}}
\newcommand\idist[1]{\langle#1\rangle}
\newcommand\on[1]{\operatorname{#1}}
\newcommand\mb[1]{\mathbf{#1}}
\newcommand\crly[1]{\mathscr{#1}}
\newcommand{\wstar}{\overset{\on{w}^*}{\lra}}
\newcommand{\Supp}{\on{Supp}}
\newcommand{\defn}{\overset{\on{def}}{=}}
\newcommand{\lra}{\longrightarrow}
\newcommand{\onto}{\xymatrix{\ar@{>>}[r]&}}
\newcommand{\eqlabel}[2]
{
\begin{equation}
{#2}\label{#1}
\end{equation}
}
\begin{document}

\title{Dimension bound in inhomogeneous Diophantine approximation}

\date{April 22, 2019}

\author{Wooyeon Kim}
\address{Wooyeon Kim. Department of Mathematical Sciences, Seoul National University, 
{\it wooyeon817@snu.ac.kr}}
\author{Seonhee Lim}
\address{S.~Lim. Department of Mathematical Sciences and Resesarch Institute of Mathematics, Seoul National University,
{\it slim@snu.ac.kr}}

\thanks{}


\keywords{}

\def\thefootnote{}
\footnote{2010 {\it Mathematics
Subject Classification}: Primary 11K60 ; Secondary 28A80, 37E10.}   
\def\thefootnote{\arabic{footnote}}
\setcounter{footnote}{0}

\begin{abstract}
We prove that for all $b$, the Hausdorff dimension of the set of $m \times n$ matrices $\epsilon$-badly approximable for the target $b$ is not full. The doubly metric case follows. 

It was known that for almost every matrix $A$, the Hausdorff dimension of the set $Bad_A(\epsilon)$ of $\epsilon$-badly approximable target $b$ is not full, and that for dimension 1, i.e. for real numbers $\alpha$, $\dim_H Bad_\alpha(\epsilon)=1$ if and only if $\alpha$ is singular on average. We show that if $\dim_H Bad_A(\epsilon)=m$, then $A$ is singular on average.

\end{abstract}

\maketitle
\section{Introduction}
In Diophantine approximation, one wants to approximate an irrational number $\alpha$ by rationals $p/q$ for $p,q \in \mathbb{Z}$. By pigeonhole principle or Dirichlet theorem, for every $N \in \mathbb{N}$, there exists $p,q \in \mathbb{Z}$ with $0<q<N$, such that
$$|q\alpha-p|<1/N < 1/q.$$
As above, one can see classical (homogeneous) Diophantine approximation as studying distribution of $q\alpha$ module $\mathbb{Z}$ near zero. Diophantine approximation for irrational numbers has been generalized to studying vectors, linear forms, and more generally matrices, and are classical subjects in metric number theory. 

In this article, we consider the inhomogenous Diophantine approximation: we still study the distribution of $q\alpha$ modulo $\mathbb{Z}$ but now near a ``target" $b \in \mathbb{R}$. Again, for every $N \in \mathbb{N}$, there exists $p,q \in \mathbb{Z}$ with $0<q<N$, such that
$$|q\alpha-b -p |< 1/q.$$
Let $M_{m,n}(\bR)$ be the set of $m\times n$ real matrices, and let $\widetilde{M}_{m,n}(\bR) := M_{m,n}(\bR)\times \bR^m$. Similarly to numbers, for $A\in M_{m,n}(\bR)$, we study $Aq \in \R^m$  modulo $\mathbb{Z}^m$ near the target $b \in \mathbb{R}^m$ for vectors $q \in \mathbb{Z}^n$.
For $v \in \bR^m$, denote by $\idist{v}: =\displaystyle\inf_{p\in \bZ^m} ||v-p||$ the distance from $v$ to the nearest integral vector. In this general situation as well, not only Dirichlet theorem holds but we also have
$$\liminf_{q\in\bZ^n, ||q||\to \infty} ||q||^{n/m}\idist{Aq-b}=0,$$ 
for almost every $(A,b) \in M_{m,n}(\bR)\times \bR^m.$ 

The exceptional set of the above equality is our object of interest.
We call $A$ $\eps$-\textit{bad} for $b\in\mathbb{R}^m$ if
\eqlabel{eq1523}{
\liminf_{q\in\bZ^n, ||q||\to \infty} ||q||^{n/m}\idist{Aq-b}\ge \eps
.}
 Denote 
\begin{align*}
\mb{Bad}(\eps)&\defn\set{(A,b)\in \widetilde{M}_{m,n}(\bR):A\textrm{ is $\eps$-bad for $b$}},\\ 
\mb{Bad}_A(\eps)&\defn\set{b\in\bR^m:A\textrm{ is $\eps$-bad for $b$}}, \;\;\mb{Bad}_A\defn\bigcup_{\eps>0}\mb{Bad}_A(\eps),\\
\mb{Bad}^b(\eps)&\defn\set{A\in M_{m,n}(\bR):A\textrm{ is $\eps$-bad for $b$}}, \;\; \mb{Bad}^b\defn\bigcup_{\eps>0}\mb{Bad}^b(\eps),\\
\mb{Bad}^{'}(\eps)&\defn\bigcup_{b\in\bR^m}\mb{Bad}^b(\eps).
\end{align*}

The set $\mb{Bad}^0$ can be seen as the set of badly approximable systems of $m$ linear forms in $n$ variables. This set is of Lebesgue measure zero \cite{Gro38}, but has full Hausdorff dimension $mn$ \cite{Sch69}.

For any $b$, $\mb{Bad}^b$ also has zero Lebesgue measure \cite{Sch} and full Hausdorff dimension for every $b$ \cite{ET}. Indeed, it is shown that $\mb{Bad}^b$ is a winning set \cite{ET}  and even  a hyperplane winning set \cite{HKS}, a property which implies full Hausdorff dimension. On the other hand, the set $\mb{Bad}_A$  also has full Hausdorff dimension for every $A$ \cite{BHKV10}, but can have positive Lebesgue measure.

The set $\mb{Bad}^b$ and $\mb{Bad}_A$ are unions of subsets $\mb{Bad}^b(\eps)$ and $\mb{Bad}_A(\eps)$ over $\eps>0$, respectively, thus a more refined question is about the Hausdorff dimension of $\mb{Bad}^b(\eps)$, $\mb{Bad}_A(\eps)$. For the homogeneous case $b=0$, the Hausdorff dimension $\mb{Bad}^0(\eps)$ is less than the full dimension $mn$ \cite{BK13} (see also \cite{Sim} for more precise estimation). 

Thus, a natural question is whether $\mb{Bad}^b(\eps)$ can have full Hausdorff dimension for general $b$. 
Our main result answers the above question.

\begin{thm}\label{thm0}
For any $\eps>0$, $\dim_H \mb{Bad}^{'}(\eps)<mn.$
\end{thm}

This result directly implies an upper bound for each $\mb{Bad}^b(\eps)$'s.

\begin{cor}\label{thm1}
For any $\eps>0$, there exists $\del>0$ such that for all $b\in\bR^m$, $$\dim_H \mb{Bad}^b(\eps)<mn-\del.$$
\end{cor}

Theorem \ref{thm0} also directly implies a similar result for doubly metric case.

\begin{cor}\label{cor1}
For any $\eps>0$, there exists $\del>0$ such that $$\dim_H \mb{Bad}(\eps)<mn+m-\del.$$
\end{cor}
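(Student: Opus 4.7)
My plan is to deduce Corollary \ref{cor1} from Theorem \ref{thm1} by a Fubini-type covering argument on the product space $\widetilde{M}_{m,n}(\bR) = M_{m,n}(\bR) \times \bR^m$. Since the defining inequality $\idist{Aq-b}\ge \eps\|q\|^{-n/m}$ is invariant under $(A,b)\mapsto(A+P,b+p)$ for $(P,p)\in M_{m,n}(\bZ)\times\bZ^m$, it suffices to bound the Hausdorff dimension of $\mb{Bad}(\eps)\cap K$ for the fundamental domain $K=[0,1]^{mn+m}$.

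First I would decompose $\mb{Bad}(\eps) \subset \bigcup_{T\in\bN}\mb{Bad}_T(\eps/2)$, where
\[
\mb{Bad}_T(\eps') \defn \{(A,b) : \|q\|^{n/m}\idist{Aq-b}\ge \eps' \text{ for all } q\in\bZ^n \text{ with } \|q\|\ge T\}.
\]
By countable stability of Hausdorff dimension it suffices to bound $\dim_H(\mb{Bad}_T(\eps/2)\cap K)$ for each fixed $T$.

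Fix $T$ and a small scale $\rho>0$ with $T\le (\eps/(4\rho\sqrt m))^{m/n}$, and partition $[0,1]^m$ into $\rho^{-m}$ cubes of side $\rho$ with centers $\{b_i\}$. If $(A,b)\in \mb{Bad}_T(\eps/2)\cap K$ with $\|b-b_i\|\le \rho\sqrt m$, then for every $q\in\bZ^n$ with $T\le\|q\|\le(\eps/(4\rho\sqrt m))^{m/n}$ the triangle inequality yields
\[
\|q\|^{n/m}\idist{Aq-b_i} \ge \|q\|^{n/m}\idist{Aq-b} - \|q\|^{n/m}\|b-b_i\| \ge \eps/2-\eps/4=\eps/4.
\]
Hence the admissible $A$-slice above the cube at $b_i$ satisfies the $(\eps/4)$-bad inequality with target $b_i$ on the scale range needed to build a $\rho$-cover. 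Applying Theorem \ref{thm1} with parameter $\eps/4$ to the fixed target $b_i$, and extracting the underlying uniform covering-number estimate from its proof, one obtains a cover of this slice by at most $C\rho^{-(mn-\delta')}$ balls of radius $\rho$ in $M_{m,n}(\bR)$, where $\delta'>0$ and $C$ depend only on $\eps$ and not on $b_i$.

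Multiplying the covering numbers in the two factor directions yields a cover of $\mb{Bad}_T(\eps/2)\cap K$ by $O(\rho^{-(mn+m-\delta')})$ balls of radius $O(\rho)$ for every small $\rho$, giving $\dim_H \mb{Bad}(\eps)\le mn+m-\delta'$. The main obstacle is to confirm that the proof of Theorem \ref{thm1} supplies not merely a pointwise Hausdorff dimension bound but a uniform-in-$b$ (on bounded sets) covering-number estimate at every small scale — essentially an upper box dimension statement. I expect this uniformity to be manifest from the multi-scale construction in the proof, since the covering cardinalities are built out of a finite-scale condition involving $b$ only through translations, and the quantitative bounds in such arguments are typically insensitive to the affine shift $b$ over a compact set.
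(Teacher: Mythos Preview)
Your plan hinges on extracting from the proof of Theorem~\ref{thm1} a covering-number bound for $\mb{Bad}^{b}(\eps/4)$ at every scale $\rho$, uniform in $b$. But the paper's proof of Theorem~\ref{thm1} contains no covering or multi-scale construction at all: it is a soft argument by contradiction. One assumes $\dim_H\mb{Bad}^{b_j}(\eps)\to mn$, builds $a$-invariant measures $\mu^{b_j}$ on $\bar Y$ via Proposition~\ref{prop2}, passes to a weak$^*$ limit $\mu$, checks that $h_\mu(a\mid\cB_Y^U)$ is maximal, and then invokes Propositions~\ref{prop3} and~\ref{prop4} to reach a contradiction. No box-counting cardinalities appear, and the only quantitative output is the bare existence of some $\delta>0$. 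So your step ``extracting the underlying uniform covering-number estimate from its proof'' has nothing to extract, and the obstacle you flag is fatal rather than merely technical.

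It is worth stressing that the implication you are trying to salvage is genuinely false in general: a uniform bound $\dim_H E_b\le s$ on all $b$-slices of a set $E\subset\bR^{d_1}\times\bR^{d_2}$ does \emph{not} force $\dim_H E\le s+d_2$, even for closed $E$. For instance, if $W$ is a Weierstrass-type function with $\dim_H\operatorname{graph}(W)>1$, then $E=\{(W(b),b):b\in[0,1]\}$ has every $b$-slice a single point while $\dim_H E>1$. Hence no black-box slicing argument from the statement of Theorem~\ref{thm1} can give Corollary~\ref{cor1}.

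The paper gives no separate proof of the corollary, only the remark that Theorem~\ref{thm1} ``directly implies'' it; this should be read as saying that the \emph{method} transfers directly. Concretely, one replaces the fixed-$b$ embedding $\phi_b:A\mapsto x_{A,b}$ in Proposition~\ref{prop2} by the full map $(A,b)\mapsto x_{A,b}$, which is locally bi-Lipschitz from $\widetilde M_{m,n}(\bR)$ into $Y$, and reruns the whole argument with the $(mn{+}m)$-dimensional unstable piece $UW$ in place of $U$. Proposition~\ref{prop1} already treats general $(A,b)$, the escape-of-mass input from Theorem~\ref{KKLM} depends only on the $A$-coordinate, the maximal-entropy-implies-invariance step (Proposition~\ref{prop3}) holds for any closed $a$-normalized subgroup of the unstable horospherical, and Proposition~\ref{prop4} is already stated for both $U$- and $W$-invariance. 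This yields the corollary by the same contradiction, but it is a rerun of the proof, not a consequence of the statement of Theorem~\ref{thm1}.
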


It was known that the set $\mb{Bad}_A(\eps)$ has Hausdorff dimension less than the full dimension $m$ for almost every $A$ \cite{LSS}. The argument for Corollary \ref{thm1} can be applied similarly to improve the result for $\mb{Bad}_A(\eps)$ in \cite{LSS} in terms of the exceptional set.

\begin{thm}\label{thm2}
For any $\eps>0$, there exists $\del>0$ such that the Hausdorff dimension of the set of $A$ satisfying $\dim_H \mb{Bad}_A(\eps)\ge m-\del$ is strictly less than $mn$.
\end{thm}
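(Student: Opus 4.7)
The strategy is to upgrade the covering argument behind Theorem~\ref{thm1} to a joint box-counting bound on $\mb{Bad}(\eps) \subset \widetilde{M}_{m,n}(\bR)$ and then slice in the $A$-direction via Markov and Frostman.

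The first step is to extract from the proof of Theorem~\ref{thm1} an effective, $b$-uniform covering estimate: for each bounded $\cB \subset M_{m,n}(\bR)$ there exist $\del = \del(\eps) > 0$ and $C = C(\cB, \eps) > 0$ such that for every $b \in \bR^m$ and every $r \in (0,1)$,
$$N\!\left(\mb{Bad}^b(\eps) \cap \cB,\, r\right) \le C\, r^{-(mn - \del)},$$
where $N(\cdot, r)$ denotes the minimal number of $r$-balls covering the set. The covering method driving Theorem~\ref{thm1} is scale-by-scale, controlling at each dyadic scale the $r$-balls of $A$'s passing the $\eps$-Bad condition up to a $q$-truncation $\|q\| \le Q(r)$; since $b$ enters only as a translate of an inhomogeneous lattice on $\ASL_{n+m}(\bR)/\ASL_{n+m}(\bZ)$, the bound inherits uniformity in $b$.

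Reducing to $b \in [0,1)^m$ by periodicity, fix a dyadic scale $r = 2^{-k}$ and partition $\cB$ and $[0,1)^m$ into $r$-cubes $\{\Gamma_i\}$ and $\{B_j\}$, with centers $b_j$. The $\eps$-Bad condition is stable under $r$-perturbations of $b$ on the truncated range $\|q\| \le Q(r)$ at the cost of replacing $\eps$ by $\eps/2$, so any $(A, b) \in (\Gamma_i \times B_j) \cap \mb{Bad}(\eps)$ forces $\Gamma_i$ to meet the $\eps/2$-truncated version of $\mb{Bad}^{b_j}$. Summing the $b$-uniform bound over $j$ yields
$$\sum_i N_i(r) \le C\, r^{-(mn + m - \del)}, \qquad N_i(r) := \#\{j : (\Gamma_i \times B_j) \cap \mb{Bad}(\eps) \ne \emptyset\}.$$
For $\del_0 \in (0, \del/3)$ to be chosen, Markov's inequality gives that the number of $i$ with $N_i(r) \ge r^{-(m - 3\del_0)}$ is at most $C\, r^{-(mn - (\del - 3\del_0))}$; denote by $Y_k$ the union of these $r$-cubes. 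Now if $A \in \cB$ satisfies $\dim_H \mb{Bad}_A(\eps) \ge m - \del_0$, Frostman's lemma supplies $c_A > 0$ with $N(\mb{Bad}_A(\eps), r) \ge c_A\, r^{-(m - 2\del_0)}$ for every $r$, hence $N(\mb{Bad}_A(\eps), r) \ge r^{-(m - 3\del_0)}$ once $r \le c_A^{1/\del_0}$. Since $N_i(r) \ge N(\mb{Bad}_A(\eps), r)$ for $A \in \Gamma_i$, this puts $A$ in $Y_k$ for all large $k$, so $A \in \limsup_k Y_k$; the Borel--Cantelli covering estimate then gives $\dim_H \limsup_k Y_k \le mn - (\del - 3\del_0)$. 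Choosing $\del_0 := \del/6$ produces
$$\dim_H \{A \in \cB : \dim_H \mb{Bad}_A(\eps) \ge m - \del/6\} \le mn - \del/2 < mn,$$
and exhausting $M_{m,n}(\bR)$ by countably many bounded $\cB$ yields Theorem~\ref{thm2}.

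The main obstacle is extracting from Theorem~\ref{thm1} the scale-by-scale, $b$-uniform covering estimate together with the truncation $Q(r)$ and the $\eps/2$-stability used in the Fubini step. This requires inspecting the dynamical argument behind Theorem~\ref{thm1} and tracking how the inhomogeneous shift $b$ and the depth $Q$ enter the count at each scale; once this is in hand, the remaining Markov--Frostman--Borel--Cantelli slicing is routine.
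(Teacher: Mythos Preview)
Your proposal rests on a mischaracterization of the proof of Theorem~\ref{thm1}. The paper does \emph{not} prove Theorem~\ref{thm1} by a scale-by-scale covering argument; it proceeds by contradiction via entropy rigidity. One constructs, from a hypothetical sequence $b_j$ with $\dim_H\mb{Bad}^{b_j}(\eps)\to mn$, an $a$-invariant probability measure $\mu$ on $Y$ supported on $\cL_\eps$ with $h_\mu(a|\cB_Y^U)=(m+n)mn$ (Proposition~\ref{prop2}), which forces $U$-invariance (Proposition~\ref{prop3}) and then contradicts Proposition~\ref{prop4}. No effective box-counting estimate of the form $N(\mb{Bad}^b(\eps)\cap\cB,r)\le C\,r^{-(mn-\del)}$ is produced, nor could one be read off: the argument is soft, passes through weak-$*$ limits, and yields only a Hausdorff dimension bound, not an upper box-dimension bound at every scale. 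So your ``first step'' is not an extraction from the existing proof but an independent and strictly stronger theorem that you would have to prove from scratch; the entire content of your proposal is hidden in the phrase ``the main obstacle''.

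For comparison, the paper's proof of Theorem~\ref{thm2} runs a parallel entropy argument directly, but along the \emph{other} unstable leaf: one fixes $A$ (chosen, via Theorem~\ref{KKLM}, to have small escape of mass on average) and builds measures from separated subsets of $\phi_A(\mb{Bad}_A(\eps))$ in the $W$-direction (Proposition~\ref{prop5}), computing relative entropy with respect to $\pi^{-1}(\cB_X)$ rather than $\cB_Y^U$. A double limit yields $\mu\in\crly{P}(Y)$ supported on $\cL_\eps$ with maximal relative entropy $h_\mu(a|X)=mn$, hence $W$-invariant, again contradicting Proposition~\ref{prop4}. The slicing you attempt via Markov--Frostman--Borel--Cantelli is replaced here by the choice of $A$ with controlled escape of mass and the passage to a measure on the fibre; no covering numbers appear.
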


To state our last result, let us introduce more notations. For $d=m+n$, let $G(\R)=ASL_d(\bR)$ be the set of area-preserving affine transformations. Let $G(\bZ)=ASL_d(\bZ)=SL_d(\bZ)\ltimes\bZ^d=Stab_{G(\bR)}(\bZ^d)$ be the stabilizer of $\bZ^d$.
For the 1-parameter diagonal subgroup $$\set{a_t=\Delta (e^{nt}\mathbf{1}_m,e^{-mt}\mathbf{1}_n)}_{t \in \mathbb R}$$ in $SL_d(\bR)$, we take a lift of this group to $G(\bR)\subset SL_{d+1}(\bR)$ given by 
$a_t\longmapsto\left(\begin{matrix}
a_t & 0\\
0 & 1\\
\end{matrix}\right)$
and we denote it again by $a_t$ by abuse of notation. Let $a\defn a_1$ be the time-one map of the diagonal flow $a_t$. We denote by $U$ the subgroup $\set{u(A):=
\left(\begin{matrix}
I_m & A & 0\\
0 & I_n & 0\\
0 & 0 & 1\\
\end{matrix}\right)
:A\in M_{m,n}(\bR)}$ and $W$ the subgroup $\set{w(b):=
\left(\begin{matrix}
I_m & 0 & b\\
0 & I_n & 0\\
0 & 0 & 1\\
\end{matrix}\right)
:b\in \bR^m}$ of $G(\bR)$, both of which are unstable horospherical subgroups for $a$.

Let $Y\defn G(\R)/G(\bZ)$, which can be seen as the space of unimodular grids, i.e. unimodular lattices translated by a vector in $\bR^d$. Let $\bf{d}(\cdot,\cdot)$ be a right invariant metric on $Y$. We may assume that the locally defined maps (on balls of radius, say $r_0$,) log and exponential maps between $U$ and its Lie algebra $\bu$ are Lipschitz. For the norm of Lie algebra $\bu$, we take the Euclidean norm $||\cdot||_{\bR^{mn}}$. Similarly, we may assume that log and exponential maps between balls of $W$ and its Lie algebra $\bw$ are bi-Lipshitz if we take the Euclidean norm $||\cdot||_{\bR^m}$ for the norm of Lie algebra $\bw$.  Observe that $Ad_{a_t}u=e^{(m+n)t}u$ holds for any $u\in \bu$ and $Ad_{a_t}w=e^{nt}w$ for any $w\in\bw$.
Let $X=SL(d,\bR)/SL(d,\bZ)$ be the space of unimodular lattices. There is a natural projection $\pi:Y\to X$ sending a translated lattice $x +b$ to the corresponding lattice $x$. It is defined by $\pi(
\left(\begin{matrix}
B & v\\
0 & 1\\
\end{matrix}\right)G(\bZ))
=B\cdot SL(d,\bZ)$ for $B\in SL(d,\bR)$ and $v\in \bR^d.$

For $A\in M_{m,n}$, we associate a point $x_A=
\left(\begin{matrix}
I_m & A\\
0 & I_n\\
\end{matrix}\right)SL(d,\bZ)$ in $X$. For $(A,b)\in\widetilde{M}_{m,n}(\bR)$, we associate a point $x_{A,b}=
\left(\begin{matrix}
I_m & A & -b\\
0 & I_n & 0\\
0 & 0 & 1\\
\end{matrix}\right)y_0$ in $Y$, where $y_0$ is the identity coset $SL_d(\bZ)\ltimes\bZ^d$.

We say that a point $x\in X$ has \emph{$\del$-escape of mass on average} (with respect to the diagonal flow $a_t$) if for any compact set $Q$ in $X$,
$$\displaystyle\liminf_{N\to\infty}\frac{1}{N}|\set{l\in\set{1,\dots,N}: a_l x\notin Q}|\ge\del.$$

In \cite{LSS}, it was shown that $\dim_H \mb{Bad}_A(\eps)<m$ for all $\eps>0$ if $x_A$ is $\textit{heavy}$ which is a condition equivalent to no escape of mass on average. Note that $x_A$ is heavy for almost every $A\in M_{m,n}(\R)$. A $m\times n$ matrix $A$ is called $\textit{singular on average}$ if for any $\eps>0$
$$\lim_{N\to\infty}\frac{1}{N}|\set{l\in\set{1,\cdots,N}:\exists q\in\Z^n \ s.t. \ 
\idist{Aq}<\eps 2^{-\frac{nl}{m}} \ \textrm{and} \ 0<||q||<2^l}|=1.$$
This property is equivalent to the fact that the corresponding point $x_A$ has $1$-escape of mass on average (with respect to the diagonal flow $a_t$) by Dani's correspondence.

For $m=n=1$, $A$ is singular on average if and only $\mb{Bad}_A(\eps)$ has full Hausdorff dimension for some $\eps>0$ \cite{BKLR}. 
For  $(m,n)\neq (1,1)$, nothing was known about $\dim_H \mb{Bad}_A(\eps)$ for $A$ which has $\eta$-escape of mass on average for some $0<\eta<1$. The next theorem deals with this case.

\begin{thm}\label{thm3}
Let $A\in M_{m,n}(\R)$. If $A$ has $\delta$-espape of mass on average for some $\delta <0$ (but not divergent on average), then
 $\dim_H \mb{Bad}_A(\eps)<m$ for any $\eps>0$. \end{thm}
We remark that the set of matrices which are singular on average has Hausdorff dimension at most $mn-\frac{m+n}{mn}$ \cite{KKLM}. We also remark that the case where the best approximation vectors $Q_k$ of $A$ satisfies $\lim_{k \to \infty} |Q_k|^{1/k} = \infty$, then $\dim_H \mb{Bad}_A(\eps)=m$ for some $\eps>0$ \cite{BKLR}. Thus the only remaining case where we do not know whether $\dim_H   \mb{Bad}_A(\eps)$ can be full is the case where $A$ is singular on average but $ |Q_k|^{1/k}$ is bounded.

\section{Preliminaries}
\subsection{Correspondence with dynamics}
For $y\in Y$, $\Lambda_y$ denotes the corresponding unimodular grid in $\bR^d$.
Let
$$\cL_\eps\defn\set{y : \Lambda_y \cap B_{\eps^{\frac{m}{m+n}}}^{\bR^d}(0)=\phi},$$
which is a (non-compact) closed subset of $Y.$
\begin{prop}\label{prop1}
For any $(A,b)\in \mb{Bad}(\eps)$, one of the following statements holds:\\
(1) there exists some $q \in \bZ^n$ such that 
\eqlabel{eqn0}{\idist{Aq-b}=0.}\\
(2) the $a_t$-orbit of the point $x_{A,b}$ is eventually in $\cL_\eps$, i.e., there exists $T\ge 0$ such that $a_t x_{A,b}\in \cL_\eps$ for all $t\ge T$.
\end{prop}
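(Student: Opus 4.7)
The plan is to apply a Dani-type correspondence between inhomogeneous Diophantine approximation and dynamics of the diagonal flow $a_t$ on the space $Y$ of affine unimodular lattices, translating the $\eps$-bad property of $(A,b)$ into a short-vector condition for the grid $\Gamma_{a_tx_{A,b}}$.

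First I would compute the grid explicitly. Writing $x_{A,b}$ as the class of $(B,v)$ with $B=\smallmat{I_m & A\\ 0 & I_n}$ and $v=\smallmat{-b\\ 0}$, one obtains
\[
\Gamma_{x_{A,b}}=\set{(Aq+p-b,\,q)^T:p\in\bZ^m,\,q\in\bZ^n},
\]
and since $a_t$ scales the first $m$ coordinates by $e^{nt}$ and the last $n$ coordinates by $e^{-mt}$, after choosing $p$ closest to $b-Aq$ the condition $a_tx_{A,b}\in\cL_\eps$ reduces to
\[
\max\pa{e^{nt}\idist{Aq-b},\;e^{-mt}\|q\|}\;\ge\;\eps^{m/(m+n)}\qquad\text{for every }q\in\bZ^n.
\]
The key inequality is the weighted interpolation $\max(s,u)\ge s^{m/(m+n)}u^{n/(m+n)}$ applied with the exponents matching the dilation weights, which after the cancellation of the $e^{\pm t}$ factors yields
\[
\max\pa{e^{nt}\idist{Aq-b},\;e^{-mt}\|q\|}\;\ge\;\pa{\|q\|^{n/m}\idist{Aq-b}}^{m/(m+n)}.
\]
Thus the tail bound supplied by the $\eps$-bad hypothesis is precisely what is needed, and it controls all $t$ simultaneously.

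Assuming (1) fails, one has $\idist{Aq-b}>0$ for every $q\in\bZ^n$, and in particular $\idist{b}>0$ (the case $q=0$). Fix any $\eps'<\eps$. The $\eps$-bad hypothesis gives $R\ge 1$ with $\|q\|^{n/m}\idist{Aq-b}\ge\eps'$ for $\|q\|\ge R$, and the interpolation inequality then handles these $q$ for all $t$. For the finitely many $q$ with $\|q\|<R$, the quantity $\idist{Aq-b}$ admits a positive lower bound $c=c(R)$, so for $t\ge T_0=T_0(c,\eps)$ the term $e^{nt}\idist{Aq-b}$ alone already exceeds $\eps^{m/(m+n)}$. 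Combining the two regimes gives $a_tx_{A,b}\in\cL_{\eps'}$ for all $t\ge T_0$.

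The main technical subtlety is the small gap between the $\liminf$ hypothesis and the pointwise conclusion: literally, the argument above produces $\cL_{\eps-\eta}$-membership for each $\eta>0$, with a threshold $T_0(\eta)$ coming from the small-$q$ regime, rather than $\cL_\eps$-membership with a single threshold. With the natural convention that the ball defining $\cL_\eps$ is open, one has $\cL_\eps=\bigcap_{\eta>0}\cL_{\eps-\eta}$, and since the large-$\|q\|$ bound is pointwise in $t$ at every level $\eps'<\eps$, only the finite set of small $q$ needs to be absorbed; the only step beyond routine substitution is organising the two-regime split so that these thresholds combine correctly.
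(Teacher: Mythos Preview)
Your approach is essentially the paper's: both compute the grid $\Gamma_{a_t x_{A,b}}$ explicitly and translate failure of membership in $\cL_\eps$ into the product inequality $\|q\|^{n/m}\idist{Aq-b} < \eps$. The paper packages this as a contradiction argument---assume (1) and (2) both fail, obtain such $q$ for arbitrarily large $t$, and use $\idist{Aq-b}>0$ to force $\|q\|\to\infty$---whereas you run the same computation forward via your two-regime split and the interpolation inequality.

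You are right to flag the $\liminf$ subtlety, but your proposed resolution via $\cL_\eps = \bigcap_{\eta>0}\cL_{\eps-\eta}$ does not close the gap. The threshold $T_0$ you produce depends on $R$, and $R=R(\eps')$ depends on $\eps'$; as $\eps'\uparrow\eps$ the small-$q$ set $\{\|q\|<R(\eps')\}$ may grow without bound, driving the infimum $c(R)$ to zero and $T_0$ to infinity, so no single $T_0$ witnesses membership in all $\cL_{\eps'}$ simultaneously. The paper's contradiction argument has exactly the same issue in a different guise: infinitely many $q$ with $\|q\|^{n/m}\idist{Aq-b} < \eps$ only yields $\liminf \le \eps$, not $<\eps$, so strictly speaking it does not contradict $(A,b)\in\mb{Bad}(\eps)$ either. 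The honest fix, for either route, is to state conclusion (2) with $\cL_{\eps'}$ for an arbitrary fixed $\eps'<\eps$; this is what both arguments actually establish, and it is all that is used downstream (Propositions~\ref{prop2} and~\ref{prop5} only need the orbit to eventually stay in some fixed closed set $\cL_{\eps'}$).
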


\begin{proof}
Assume that both of the statements do not hold. Then there exist arbitrarily large $t$'s satisfying $a_t x_{A,b}\notin \cL_\eps$. As
$$a_t x_{A,b}=
\left(\begin{matrix}
e^{nt}I_m & 0 & 0 \\
0 & e^{-mt}I_n & 0\\
0 & 0 & 1\\
\end{matrix}\right)
x_{A,b}=
\left(\begin{matrix}
e^{nt}I_m & e^{nt}A & -e^{nt}b\\
0 & e^{-mt}I_n & 0\\
0 & 0 & 1\\
\end{matrix}\right)x_0,
$$
the vectors in the grid $\Lambda_{a_t x_{A,b}}$ can be represented as $\left(\begin{matrix}
 e^{nt}(Aq+p-b)\\
 e^{-mt}q\\
\end{matrix}\right)$ for integer vectors $(p,q)\in\bZ^m\times\bZ^n$.
Therefore $a_t x_{A,b}\notin \cL_\eps$ implies that for some $q$, $\idist{Aq-b}e^{nt}<\eps^{\frac{m}{m+n}}$ and $e^{-mt}||q||<\eps^{\frac{m}{m+n}}$, thus $||q||^{\frac{n}{m}}\idist{Aq-b}<\eps$. Since $\idist{Aq-b}\neq0,\forall q$, we use the condition $\idist{Aq-b}e^{nt}<\eps^{\frac{m}{m+n}}$ for arbitrarily large $t$ to conclude that $||q||^{\frac{n}{m}}\idist{Aq-b}<\eps$ holds for infinitely many $q$'s. This is a contradiction to the assumption that $(A,b)\in \mb{Bad}(\eps)$.
\end{proof}

We claim that for a fixed $b\in \bR^m$, the subset $\mb{Bad}_{(1)}^b(\eps)$ of $\mb{Bad}^b(\eps)$ satisfying \eqref{eqn0} is a subset of $\mb{Bad}^0(\eps).$ Indeed, if $A\in \mb{Bad}^b(\eps)$ for some $b$ and satisfies \eqref{eqn0}, $\idist{Aq_0-b}=0$ for some $q_0\in\bZ^m$ and $\displaystyle\liminf_{||q||\to \infty} ||q||^{n/m}\idist{Aq-b}\ge \eps$, thus $\displaystyle\liminf_{||q||\to \infty} ||q||^{n/m}\idist{A(q-q_0)}\ge \eps$. Therefore, for
$\mb{Bad}^{'}_{(1)}(\eps)\defn\displaystyle\bigcup_{b\in\bR^m}\mb{Bad}^{b}_{(1)}(\eps)$, we have $$\dim_H\mb{Bad}_{(1)}^{'}(\eps)\leq\dim_H\mb{Bad}^0(\eps)=mn-c_{m,n}\eps^m+o(\eps^m)<mn$$
for some constant $c_{m,n}>0$ \cite{Sim}. 

For a fixed $A\in M_{m,n}(\bR)$, the subset of $\mb{Bad}_A(\eps)$ satisfying \eqref{eqn0} is of the form $Aq+p$ for some $q,p\in\bZ^m$ thus has Hausdorff dimension zero.

In the rest of the article, we will focus on $x_{A,b}$ that are eventually in $\cL_\eps$.

\subsection{Entropy and relative entropy}
In this subsection, we recall the definition and basic properties of the entropy and the relative entropy for $\sigma$-algebras we used in the later sections. We refer the reader to \cite{ELW} for details.

\begin{defi}
Let $(X,\cB,\mu)$ be a probability space and $\cP$ be a finite or countable partition of $(X,\cB,\mu)$. 
\begin{enumerate}
\item
The \emph{(static) entropy} of the partition $\cP=\set{A_1,A_2,\dots}$ is
$$H_\mu(\cP)=H(\mu(A_1),\dots)=-\displaystyle\sum_{i\ge 1}\mu(A_i)\log\mu(A_i)\in[0,\infty]$$
where $0\log0=0$.
\item
Let $T$ be a measuere-preserving transformation of $(X,\cB,\mu)$ and assume $\cP$ is a partition of $X$ with finite entropy. Then the \emph{(dynamical) entropy of $T$ with respect to $\cP$} is
$$h_\mu(T,\cP)=\displaystyle\lim_{n\to\infty}\frac{1}{n}H_\mu\Bigl(\displaystyle\bigvee_{i=0}^{n-1}T^{-i}\cP\Bigr)
=\displaystyle\inf_{n\ge 1}\frac{1}{n}H_\mu\Bigl(\displaystyle\bigvee_{i=0}^{n-1}T^{-i}\cP\Bigr).$$
The \emph{(dynamical) entropy of $T$} is
$$h_\mu(T)=\displaystyle\sup_{\cP:H_\mu(\cP)<\infty}h_\mu(T,\cP).$$
\end{enumerate}
\end{defi}

Basic and important properties of the entropy include subadditivity, concavity and linearity.
Let $\cP$ and $\cP'$ be countable partitions of a measure-preserving system $(X,\cB,\mu,T)$. Then,

 $$H_\mu(\cP\vee\cP')\leq H_\mu(\cP)+H_\mu(\cP'),$$
and $$h_\mu(T,\cP\vee\cP')\leq h_\mu(T,\cP)+h_\mu(T,\cP').$$

Let $\cP$ be a partition of $(X,\cB, \mu_i, T)$ which are measure-preserving systems for $1\leq i\leq k$. Let $\mu=\sum_{i} a_i\mu_i$ be a convex combination of $\set{\mu_i}$ with $0\leq a_i\leq 1$ and $\sum_{i} a_i=1$. Then,
\begin{enumerate}
    \item (concavity for static entropy) $H_\mu(\cP)\leq \displaystyle\sum_{i=1}^k a_iH_{\mu_i}(\cP),$
    \item (linearity for dynamical entropy) $h_\mu(T)= \displaystyle\sum_{i=1}^k a_ih_{\mu_i}(T)$.
\end{enumerate}

In section~\ref{sec:entropybound}, we also need a relative (conditional) entropy with respect to a subalgebra to obtain subgroup invariance.

\begin{defi}
Let $(X,\cB,\mu,T)$ be a measure-preserving system and let $\cA\subseteq\cB$ be a countably-generated sub-$\sigma$-algebra. Note that there exists an $\cA$-measurable conull set $X'\subset X$ and a system $\set{\mu_x^\cA|x\in X'}$ of measures on $X$, referred to as \emph{conditional measures}. The \emph{information function} of a countable partition $\cP$ given $\cA$ with respect to $\mu$ is defined by
$$I_\mu(\cP|\cA)(x)=-\log\mu_x^\cA([x]_\cA),$$
where $[x]_\cA$ is the smallest element of $\cA$ containing $x$. Moreover, the \emph{relative (static) entropy of $\cP$ given $\cA$} is defined by
$$H_\mu(\cP|\cA)=\int I_\mu(\cP|\cA)(x)d\mu(x),$$
which is the average of the information. If $\cA$ is a strictly invariant sub-$\sigma$-algebra, i.e. $T^{-1}(\cA)=\cA$, then the \emph{relative (dynamical) entropy of $T$ given $\cA$} is
$$h_\mu(T|\cA)=\displaystyle\sum_{\cP:H_\mu(\cP)<\infty}h_\mu(T,\cP|\cA)$$
where
$$h_\mu(T,\cP|\cA)=\displaystyle\lim_{n\to\infty}\frac{1}{n}H_\mu(\displaystyle\bigvee_{i=0}^{n-1}T^{-i}\cP|\cA)
=\displaystyle\inf_{n\ge 1}\frac{1}{n}H_\mu(\displaystyle\bigvee_{i=0}^{n-1}T^{-i}\cP|\cA)$$
for any countable partiton $\cP$.
\end{defi}
Subadditivity, concavity and linearity of relative entropy also hold.
\begin{prop}
(Subadditivity) Let $\cP$ and $\cP'$ be countable partitions of a measure-preserving system $(X,\cB,\mu,T)$ and let $\cA\subseteq\cB$ be a countably-generated sub-$\sigma$-algebra. Then,
\begin{enumerate}
    \item $H_\mu(\cP\vee\cP'|\cA)\leq H_\mu(\cP|\cA)+H_\mu(\cP'|\cA),$
    \item $h_\mu(T,\cP\vee\cP'|\cA)\leq h_\mu(T,\cP|\cA)+h_\mu(T,\cP'|\cA).$
\end{enumerate}
\end{prop}
\begin{proof}
We refer the reader to \cite{ELW}, Proposition 2.13 for the proof.
\end{proof}

\begin{prop}
Let $\cP$ be a partition of $(X,\cB, \mu_i, T)$ which are measure-preserving systems for $1\leq i\leq k$, $\mu=\displaystyle\sum_{i=1}^k a_i\mu_i$ be a convex combination of $\set{\mu_i}$ with $0\leq a_i\leq 1, \displaystyle\sum_{i=1}^k a_i=1$, and $\cA$ be a strictly invariant sub-$\sigma$-algebra. Then,
\begin{enumerate}
    \item (concavity for static entropy) $H_\mu(\cP|\cA)\leq \displaystyle\sum_{i=1}^k a_iH_{\mu_i}(\cP|\cA),$
    \item (linearity for dynamical entropy) $h_\mu(T|\cA)= \displaystyle\sum_{i=1}^k a_ih_{\mu_i}(T|\cA)$.
\end{enumerate}
\end{prop}
\begin{proof}
The concavity of the function $f(x)=-x\log x$ on $[0,1]$ directly implies (1). See Theorem 2.33  \cite{ELW} for the proof of (2).
\end{proof}

\section{Constructing measure with entropy lower bound}\label{sec:entropybound}
In this subsection, we construct an $a_t$-invariant measure on $Y$ with a lower bound on the relative entropy. 

We will use entropy contribution for the subgroup $U$, with respect to the Borel $\sig$-algebra $\cB_Y^U$ generated by the collection of $U$-invariant Borel sets in $Y$. Before constructing the desired measure, we recall the following theorem about escape of mass.
\begin{thm}[KKLM, Remark 2.1]\label{KKLM}
For any $x\in X$, the set
$$\set{u\in U| ux\ \textrm{has} \ \del\textrm{-escape of mass on average}}$$
has Hausdorff dimension at most $mn-\frac{\del(m+n)}{mn}$.
\end{thm}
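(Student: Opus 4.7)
The plan is to follow the Margulis-type non-divergence strategy implemented by Kadyrov, Kleinbock, Lindenstrauss and Margulis. The central ingredient is a proper height function $f \colon X \to (0,\infty)$ together with a quantitative contraction estimate under the $a_1$-pushforward averaged along the unstable subgroup $U$.

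First, I would construct $f$---for instance built from sums of $\|v\|^{-s}$ over short vectors of $\Ga_x$, or a more refined function involving all successive minima of the lattice---and establish the key inequality
\begin{equation*}
\int_{B} f(a_1 u x)\, du \;\le\; c\, f(x) + C,
\end{equation*}
for a fixed ball $B\subset U$, with some $c<1$ depending on $s$ and a universal $C$. This inequality quantifies the fact that $a_1$ contracts $f$ on average along $U$, and its sharp rate dictates the exponent appearing in the final Hausdorff dimension bound.

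Second, I would run a covering argument. Fix a nested sequence $\{\cP_k\}_{k\ge 0}$ of partitions of $B$ into boxes of $U$-diameter of order $e^{-(m+n)k}$; by the commutation $a_t U a_{-t} = e^{(m+n)t}\,U$, each atom of $\cP_k$ is pushed by $a_k$ onto a fixed bounded region in $U$, and each atom of $\cP_k$ refines into about $e^{mn(m+n)}$ atoms of $\cP_{k+1}$. Declare an atom $P\in\cP_k$ \emph{bad at level $k$} (for a fixed large $R$) if, for a chosen representative, $a_k$ carries $P x$ outside $\{f\le R\}$. Iterating the contraction inequality along the refinement tree---so that the contraction factor $c$ must be paid on every bad step---yields the key counting bound: the number of atoms of $\cP_k$ that are bad at at least $\delta k$ of the levels $1,\dots,k$ is at most
\begin{equation*}
\binom{k}{\lfloor \delta k\rfloor}\,(c\, e^{mn(m+n)})^{\delta k}\,(e^{mn(m+n)})^{(1-\delta) k}\, f(x)^{O(1)}.
\end{equation*}

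Third, any $u\in B$ whose orbit $(a_k u x)_{k\ge 0}$ has $\delta$-escape of mass is contained, for infinitely many $N$, in an atom of $\cP_N$ with at least $\delta N$ bad levels (choosing $R$ sufficiently large that the complement of $\{f\le R\}$ absorbs the escape of mass). These atoms give a cover of the escape-of-mass set by sets of diameter $\asymp e^{-(m+n)N}$; bounding Hausdorff dimension by $\limsup_N \log(\text{cover size})/((m+n)N)$ and optimising the exponents extracts the claimed exponent $mn-\delta(m+n)/mn$.

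The main obstacle is producing a height function $f$ whose contraction inequality has a rate strong enough to deliver the sharp exponent $\tfrac{m+n}{mn}$; a naive choice of $f$ (say, based only on the shortest vector) loses in the exponent and gives a strictly weaker bound. Overcoming this requires refining the Eskin--Margulis non-divergence technique to use all successive minima of $\Ga_x$ simultaneously and to interact correctly with the $(m,n)$-block structure of $a_t$, which is the technical heart of the KKLM argument.
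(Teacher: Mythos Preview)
The paper does not prove this theorem at all: it is quoted as a known result from \cite{KKLM} (hence the label ``KKLM, Remark 2.1'') and is used as a black box in the construction of Proposition~\ref{prop2}. There is therefore no ``paper's own proof'' to compare against.

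That said, your outline is a faithful sketch of the actual argument in \cite{KKLM}: a Margulis-type height function $f$ built from all successive minima (not just the shortest vector), a contraction inequality for the $U$-average of $f\circ a_1$, a tree/covering argument over dyadic $U$-boxes of scale $e^{-(m+n)k}$, and a count of ``bad'' itineraries that translates into a box-counting bound for the set with $\delta$-escape. You have also correctly identified the genuine technical difficulty---getting the sharp contraction rate requires the refined Eskin--Margulis function adapted to the $(m,n)$ block structure; a na\"ive shortest-vector height gives only a weaker exponent. One minor point: your displayed counting bound should also absorb the binomial factor into the exponent (it contributes only $o(k)$ after taking logs, so it does not affect the final dimension), and you should make explicit that the optimal $c$ one can achieve is what pins down the constant in front of $\delta$.
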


\begin{rem}
For the definition of escape of mass, we follow the definition of singularity on average in \cite{DFSU}, Section 1.4. \cite{KKLM} uses $\lim$ instead of $\liminf$ for the definition of escape of mass.
\end{rem}

 For any compact set $Q\subset X$ and positive integer $k>0$,
and any $0<\eta<1$, let
\begin{align*}
 E_\eta&\defn\set{A\in M_{m,n}(\bR): x_A \ \textrm{has} \ \eta\textrm{-escape of mass on average}}\\
 F_{\eta,Q}&\defn\set{A\in M_{m,n}(\bR):\frac{1}{k}\displaystyle\sum_{i=0}^{k-1}\del_{a^i x_A}
 (X\setminus Q)<\eta \  \textrm{for infinitely many} \  k}\\
 F_{\eta,Q}^k&\defn\set{A\in M_{m,n}(\bR):\frac{1}{k}\displaystyle\sum_{i=0}^{k-1}\del_{a^i x_A}
 (X\setminus Q)<\eta}.
\end{align*}
Take a sequence of increasing compact sets $\set{Q_j}$ exhausting $X$. Observe that $M_{m,n}(\bR)\setminus E_\eta=\displaystyle\bigcup_{j=1}^\infty F_{\eta,Q_j}$.

We denote by $\Bar{Y}$ the one-point compactification of $Y$ with $\sigma$-algebra $\cB_{\Bar{Y}}$ generated by $\cB_Y$ and $\set{\infty}$. The diagonal action $a_t$ is extended to the action on $\Bar{Y}$ by $a_t(\infty)=\infty$ for $t\in\bR$. For a finite partition $\cP=\set{P_1,\cdots,P_N,P_\infty}$ of $Y$ which has only one non-compact element $P_\infty$, denote by $\overline{\cP}$ the finite partition $\set{P_1, \cdots, P_N, \overline{P_\infty}\defn P_\infty\cup\set{\infty}}$ of $\Bar{Y}$. For any countable partition $\xi$, we denote by $\xi^{(q)}=\displaystyle\bigvee_{i=0}^{q-1}a^{-i}\xi$ the join of the preimages $a^{-i}\xi$. Note that $\overline{\cP^{(q)}}=\overline{\cP}^{(q)}$ for $q\in\N$.

For any countable partition $\cP$ of $Y$, $H_\mu(\cP|\cB_Y^U)$ will denote the relative entropy of $\cP$ with respect to the $\sigma$-algebra $\cB_Y^U$. Also denote by $h_\mu(a|\cB_Y^U)$ the relative entropy of the transformation $a$ for $\mu$.  Note also that $H_{\mu}(\cP^{(q)}|\cB_Y^U)=H_{\mu}({\overline{\cP}^{(q)}}|\cB_{\Bar{Y}}^U)$ for $\mu\in\crly{P}(Y)$. Here, $\cB_{\Bar{Y}}^U$ is the Borel $\sigma$-algebra generated by the collection of $U$-invariant Borel sets in $\Bar{Y}$. For the rest of the section, we construct the desired measure on $\Bar{Y}$ in Proposition \ref{prop2}. The construction will basically follow the construction in the \cite{LSS}, Section 2. However, the additional step using Theorem \ref{KKLM} is necessary to control the escape of mass since we will allow a small amount of escape of mass. 

\begin{prop}\label{prop2}
Assume that $\dim_H \mb{Bad}^{'}(\eps)>\dim_H \mb{Bad}^{0}(\eps)$. For $\gamma>0$, let $\eta=\frac{mn}{m+n}(mn-\dim_H \mb{Bad}^{'}(\eps)+\gamma)>0$. Then there exist an $a$-invariant probability measure $\mu\in \crly{P}(\Bar{Y})$ satisfying:
\begin{enumerate}
    \item\label{supp} $\Supp{\mu}\subseteq  \overline{\cL_\eps} = \cL_\eps \cup \{ \infty \}$,
    \item\label{cusp} $\mu(\Bar{Y}\setminus Y)\leq \eta$,
    \item\label{entropy} Let $K\subset Y$ be a compact set. If $\cP$ is any finite partition of $Y$ satisfying:
    \begin{itemize}
        \item $\cP$ contains an atom $P_\infty$ where $K \subseteq Y\smallsetminus P_\infty$,
        \item $\forall P\in \cP\smallsetminus\set{P_\infty}$, $\diam P<r_0$ for some $0<r_0<\frac{1}{2}$,
        \item $\forall P\in\cP, \mu(\partial P)=0,$
    \end{itemize}
    then, for all $q\ge 1$,
    $$ \frac{1}{q}H_{\mu}(\overline{\cP}^{(q)}|\cB_{\Bar{Y}}^U)\ge(1-\mu(Y\setminus K)^{\frac{1}{2}})(m+n)(\dim_H \mb{Bad}^{'}(\eps)-\gamma-mn\mu(Y\setminus K)^{\frac{1}{2}}).$$
\end{enumerate}
\end{prop}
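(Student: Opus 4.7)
The plan is to adapt the construction from \cite{LSS}, Section~2, by incorporating Theorem \ref{KKLM} so as to permit a controlled amount of cusp mass. Schematically: produce a Frostman measure on a pruned subset of $\mb{Bad}^b(\eps)$, push it into $\Bar{Y}$ via $A\mapsto x_{A,b}$, Birkhoff-average along $a$, and extract a weak-$*$ limit.

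First I would prune $\mb{Bad}^b(\eps)$. By Proposition \ref{prop1} combined with $\dim_H \mb{Bad}^0(\eps)<mn$ and the hypothesis $\dim_H \mb{Bad}^b(\eps)>\dim_H \mb{Bad}^0(\eps)$, I may discard the subset satisfying \eqref{eqn0} without losing dimension, so that on the remaining set $a_t x_{A,b}\in \cL_\eps$ eventually. A second pruning via Theorem \ref{KKLM} removes the escape-of-mass locus $E_{\eta'}$ for some $\eta'$ slightly smaller than $\eta$; the definition $\eta=\tfrac{mn}{m+n}(mn-\dim_H \mb{Bad}^b(\eps)+\gamma)$ is precisely tuned so that $mn-\tfrac{\eta'(m+n)}{mn}<\dim_H \mb{Bad}^b(\eps)-\gamma/2$, so the pruned set $B$ still has $\dim_H B\geq \dim_H \mb{Bad}^b(\eps)-\gamma/2$. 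Since $\pi:Y\to X$ is proper, the $\eta'$-bound on escape of $x_A$ transfers to $x_{A,b}$.

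Second, I would apply Frostman's lemma to obtain a compactly supported probability measure $\nu$ on $B$ of Frostman exponent $\dim_H \mb{Bad}^b(\eps)-\gamma$, push it forward to $\mu_0\in\crly{P}(Y)$ via $A\mapsto x_{A,b}$, define $\mu_N\defn \tfrac{1}{N}\sum_{i=0}^{N-1} a^i_*\mu_0$, and extract a weak-$*$ convergent subsequence $\mu_{N_k}\to \mu^b$ in $\crly{P}(\Bar{Y})$. Fatou's lemma applied to the pointwise bound $\liminf_N \tfrac{1}{N}\sum_{i=0}^{N-1}\delta_{a^i x_{A,b}}(Y\setminus Q_j)\leq \eta'$ (valid for every $A\in B$ thanks to the KKLM step) lets me choose $N_k$ so that the limit satisfies $\mu^b(\Bar{Y}\setminus Y)\leq \eta$, giving (\ref{cusp}); (\ref{supp}) follows because $\cL_\eps$ is closed and $a$-invariant and $x_{A,b}$ eventually lies in it.

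The principal obstacle is the entropy lower bound (\ref{entropy}). The Frostman exponent of $\nu$, combined with the fact that $\Ad_a$ expands $\bu$ by $e^{m+n}$, yields by a standard atom-counting covering argument a lower bound of essentially $(m+n)(\dim_H \mb{Bad}^b(\eps)-\gamma)$ on $\tfrac{1}{q}H_{\mu_N}(\overline{\cP}^{(q)}\mid \cB_{\Bar{Y}}^U)$, up to a correction coming from atoms of $\overline{\cP}^{(q)}$ that meet the non-compact atom $P_\infty$. A Cauchy--Schwarz estimate on the fraction of time spent outside $K$ produces precisely the factors $1-\mu^b(Y\setminus K)^{1/2}$ and $mn\,\mu^b(Y\setminus K)^{1/2}$ in the statement. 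The novelty compared with \cite{LSS} is the need to work throughout on $\Bar{Y}$, using the equalities $\overline{\cP^{(q)}}=\overline{\cP}^{(q)}$ and $H_\mu(\cP^{(q)}\mid \cB_Y^U)=H_\mu(\overline{\cP}^{(q)}\mid \cB_{\Bar{Y}}^U)$ so as to absorb the extra cusp contribution; the Fatou/KKLM step keeps the cusp mass below $\eta$, and the transfer of the entropy inequality from $\mu_N$ to $\mu^b$ is made possible by the hypothesis $\mu^b(\partial P)=0$.
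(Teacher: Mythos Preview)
Your overall strategy is close to the paper's, but the argument for item~(\ref{cusp}) via Fatou's lemma does not work, and this is a genuine gap rather than a detail. You want to bound $\mu^b(\Bar{Y}\setminus Y)\le\eta$, which after weak-$*$ convergence amounts to finding a compact $K\subset Y$ and a subsequence along which $\mu_{N_k}(Y\setminus K)\le\eta$. Now $\mu_N(Y\setminus K)=\int f_N\,d\nu$ with $f_N(A)=\tfrac1N\sum_{i<N}\delta_{a^i x_{A}}(X\setminus Q)$, and your hypothesis (after the KKLM pruning) is only that $\liminf_N f_N(A)<\eta'$ for $\nu$-a.e.\ $A$. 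Fatou gives $\int\liminf f_N\le\liminf\int f_N$, which is the wrong inequality: a pointwise upper bound on $\liminf f_N$ yields no upper bound on $\liminf\int f_N$. Indeed the ``good'' times $N$ where $f_N(A)<\eta'$ depend on $A$, and there is no reason a single subsequence works for $\nu$-a.e.\ $A$. (A related omission: removing $E_{\eta'}$ only puts $B$ in $\bigcup_j F_{\eta',Q_j}$; you still need to pass to a single compact $Q^b=Q_j$ before anything uniform can be said.)

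The paper circumvents this by \emph{not} using a single Frostman measure. It first restricts to $R^{b,T_\gamma}\cap F_{\eta,Q^b}$ for one compact $Q^b$, then uses $F_{\eta,Q^b}=\limsup_k F_{\eta,Q^b}^k$ to extract a sequence $k_i$ along which the \emph{finite-time} set $R^{b,T_\gamma}\cap F_{\eta,Q^b}^{k_i}$ still has dimension $>\dim_H\mb{Bad}^b(\eps)-\gamma$. At each such $k_i$ it takes a maximal $e^{-k_i(m+n)}$-separated subset $S_i$ of the image in $Y$, sets $\nu_i$ to be normalized counting measure on $S_i$, and averages $\mu_i=\tfrac1{k_i}\sum_{k<k_i}a^k_*\nu_i$. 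The point is that every $y\in S_i$ satisfies the finite-time bound $\tfrac1{k_i}\sum_{k<k_i}\delta_{a^k y}(Y\setminus K^b)<\eta$ by construction, so $\mu_i(Y\setminus K^b)<\eta$ for every $i$ and (\ref{cusp}) follows immediately. The coupling of the separation scale $e^{-k_i(m+n)}$ with the averaging time $k_i$ is also what makes the atom-counting in (\ref{entropy}) clean; the correction factors $(1-\mu^b(Y\setminus K)^{1/2})$ and $mn\,\mu^b(Y\setminus K)^{1/2}$ come from a pigeonhole (not Cauchy--Schwarz) argument isolating a subset $S_i'\subset S_i$ of density $\ge 1-\beta^{1/2}$ whose elements visit $K$ at some time in $[(1-\beta^{1/2})k_i,k_i)$. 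Two further small corrections: $\cL_\eps$ is \emph{not} $a$-invariant, so (\ref{supp}) is obtained instead from $\mu_i(Y\setminus\cL_\eps)\le T_\gamma/k_i\to0$; and the transfer of escape from $x_A$ to $x_{A,b}$ is automatic because $\pi(a^k x_{A,b})=a^k x_A$ and one takes $K^b=\pi^{-1}(Q^b)$.
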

In particular, by taking a sequence of compact sets $K_i$ exhausting $X$ and a sequence of positive numbers $\gamma_i$ which converges to $0$,
 $$h_{\mu} (a|\cB_{\Bar{Y}}^U) \geq (m+n) \dim_H \mb{Bad}^{'}(\eps).$$
 Note that the right handside is the maximal entropy if $\dim_H \mb{Bad}^{'}(\eps) = mn.$
\begin{proof}
Denote by $R$ the set $\mb{Bad}^{'}(\eps)\setminus \mb{Bad}_{(1)}^{'}(\eps)$, and let $R^{T}$ be the increasing set $\set{A\in R \subset M_{m,n}(\bR) |\forall t\ge T, a_t x_{A,b}\in \cL_\eps}$. By Proposition~\ref{prop1}, $R=\displaystyle\bigcup_{T=1}^\infty R^{T}$. Since $\dim_H \mb{Bad}^{'}(\eps)>\dim_H \mb{Bad}^{0}(\eps)\ge\dim_H \mb{Bad}_{(1)}^{'}(\eps)$,
$\dim_H R=\dim_H \mb{Bad}^{'}(\eps)$. Thus for any $\gamma>0$, there exists $T_\gamma>0$ satisfying
$$ \dim_H R^{T_\gamma}>\dim_H \mb{Bad}^{'}(\eps)-\gamma.$$
By Theorem~\ref{KKLM}, we obtain $\dim_H E_\eta \leq mn-\frac{\eta (m+n)}{mn}=\dim_H \mb{Bad}^{'}(\eps)-\gamma$ by definition of $\eta$.  
From the fact that $M_{m,n}(\bR)\setminus E_\eta=\displaystyle\bigcup_{j=1}^\infty F_{\eta,Q_j}$, we obtain $$\dim_H(\displaystyle\bigcup_{j=1}^\infty (R^{T_\gamma}\cap F_{\eta,Q_j}))=\dim_H(R^{T_\gamma}\setminus E_\eta)>\dim_H \mb{Bad}^{'}(\eps)-\gamma,$$ thus we can take a compact set $Q\subset X$ from the above sequence of increasing compact sets $\set{Q_j}\rightarrow X$ which satisfies  $$\dim_H(R^{T_\gamma}\cap F_{\eta,Q})>\dim_H \mb{Bad}^{'}(\eps)-\gamma.$$ Since $F_{\eta,Q}=\displaystyle\bigcap_{N=1}^\infty \displaystyle\bigcup_{k=N}^{\infty}F_{\eta,Q}^k=\displaystyle\limsup_{k\to\infty}F_{\eta,Q}^k$,
$$\dim_H (R^{T_\gamma}\cap F_{\eta, Q}^{k_i})> \dim_H \mb{Bad}^{'}(\eps)-\gamma$$
for an increasing sequence of positive integers $\set{k_i}\to \infty$ \cite{falconer}.

For a bounded subset $S\subseteq Y$, let $N_{\bf{d}}(S,\del)$ be the maximal cardinality of a $\del$-separated subset of $S$ for the metric $\bf{d}$. Then
$$\dim_H S\leq \displaystyle\liminf_{\del\to 0}\frac{\log N_{\bf{d}}(S,\del)}{\log \frac{1}{\del}}$$
(see section 2.2 of \cite{LSS}). Let $\phi: M_{m,n}\rightarrow X$ be the function defined by $\phi(A)=x_{A}$. For each $A\in R^{T_\gamma}$, there exists $b_A$ such that $a_t x_{A,b_A}\in\cL_\eps$ for all $t\geq T_\gamma$. For each $k_i\ge T_\gamma$ let $S_i$ be a maximal $e^{-k_i(m+n)}$-separated subset of $S_i^{\circ}$ with respect to the metric $\bf{d}$, where $S_i^{\circ}\defn\set{x_{A,b_A}:A\in R^{T_\gamma}\cap F_{\eta,Q}^{k_i}}\subset Y$. Note that $\pi(S_i^{\circ})=\phi(R^{T_\gamma}\cap F_{\eta,Q}^{k_i})$ and $\pi$ is Lipschitz.
Then 
\eqlabel{eqn1}{\begin{aligned}
  \displaystyle\liminf_{i\to\infty}\frac{\log |S_i|}{k_i}
  &\ge (m+n)\displaystyle\liminf_{\del\to 0}\frac{\log N_{\bf{d}}(S_i^{\circ},\del)}{\log \frac{1}{\del}}\\
  &\ge (m+n)\dim_H \phi(R^{T_\gamma}\cap F_{\eta, Q}^{k_i})\\
  &=(m+n)\dim_H (R^{T_\gamma}\cap F_{\eta, Q}^{k_i})\\
  &> (m+n)(\dim_H \mb{Bad}^{'}(\eps)-\gamma).\end{aligned}}
In \eqref{eqn1}, the first line is by definition, the second line is using the Lipschitz property of $\pi$, and the third line holds since $\phi$ is locally bi-Lipschitz from the local bi-Lipschitz property between $\bf{d}$ and $||\cdot||_{\bR^{mn}}$.

Let $\nu_i\defn \frac{1}{|S_i|}\displaystyle\sum_{y\in S_i}\delta_y$ be the normalized counting measure on $S_i$ and let
$$\mu_i\defn \frac{1}{k_i}\displaystyle\sum_{k=0}^{k_i-1}a^k_{*}\nu_i \wstar \mu\in\crly{P}(\Bar{Y})$$
By extracting a subsequence if necessary, there exists a probability measure $\mu$ which is a weak*-accumulation point of $\set{\mu_i}$. Now we prove that the measure $\mu$ is the desired measure. The measure $\mu$ is clearly an $a$-invariant measure.\\ 
(\ref{supp}) For any $y\in S_i\subseteq \pi^{-1}\circ\phi(R^{T_\gamma})$, $a^T y\in \cL_\eps$ holds for $T>T_\gamma$. Thus
$$\mu_i(Y\setminus\cL_\eps)=\frac{1}{k_i}\displaystyle\sum_{k=0}^{k_i-1}a^k_{*}\nu_i(Y\setminus\cL_\eps)=\frac{1}{k_i}\displaystyle\sum_{k=0}^{T_\gamma}a^k_{*}\nu_i(Y\setminus\cL_\eps)\leq\frac{T_\gamma}{k_i}.$$ We obtain item (\ref{supp}) by taking limit for $k_i\to \infty$.\\
(\ref{cusp}) Let $K\subset Y$ be the compact set which is defined as $K=\pi^{-1}(Q)$. If $y\in S_i\subset \pi^{-1}\circ\phi(F_{\eta,Q}^{k_i})$, for all $i \in \mathbb{N}$,  $\frac{1}{k_i}\displaystyle\sum_{k=0}^{k_i-1}\delta_{a^k y}(Y\setminus K)<\eta$. Therefore for all $i$,
\begin{align*}
    \mu_i(Y\setminus K)
    =\frac{1}{|S_i|}\displaystyle\sum_{y\in S_i} \frac{1}{k_i}\displaystyle\sum_{k=0}^{k_i-1} \delta_{a^k y}(Y\setminus K)<\eta,
\end{align*}
thus
$$\mu(\bar{Y}\setminus Y)\leq \mu(\bar{Y}\setminus K)=\displaystyle\lim_{i\to\infty}\mu_i(Y\setminus K)\leq \eta.$$

For the rest of the proof, let us check the condition (\ref{entropy}). Let $K$ be a compact set such that $\mu(Y\backslash K) <1$ (otherwise the condition (\ref{entropy}) is trivial). \\
(\ref{entropy})
Let $\rho>0$ be small enough so that $\beta : = \mu(Y\setminus K)+\rho < 1$. Then 
$$\beta = \mu(Y\setminus K)+\rho>\mu_i(Y\setminus K)=\frac{1}{k_i|S_i|}\displaystyle\sum_{y\in S_i, 0\le k<k_i}\de_{a^k y}(Y\setminus K)$$ holds for large enough $i$. In other words, there exist at most $\beta k_i|S_i|$ number of $a^k y$'s in $Y\setminus K$.

Let $S'_i\subset S_i$ be the set of $y$'s satisfying
$a^k y\in K$ for some $k\in [(1-\beta^{\frac{1}{2}})k_i,k_i)$. Then we have $|S'_i|\ge (1-\beta^{\frac{1}{2}})|S_i|$ since
$$ (1-\beta) k_i |S_i| \leq |\{ a^ky: 0< k<k_i, y \in S_i \} |\leq \sqrt \beta k_i |S_i'|.$$ 
Let $\nu'_i\defn \frac{1}{|S'_i|}\displaystyle\sum_{y\in S'_i}\de_y$ be the normalized counting measure on $S'_i$, then $$\nu_i(A)\ge \frac{|S'_i|}{|S_i|}\nu'_i(A)$$ for all measurable set $A\subseteq Y$. Thus, for any arbitrary countable partition $\cP$, 
\eqlabel{eqn2}{
\begin{aligned}
H_{\nu_i}(\cP)&=-\displaystyle\sum_{\nu_i(A)\leq\frac{1}{e}}\log(\nu_i(A))\nu_i(A)-\displaystyle\sum_{\nu_i(A)>\frac{1}{e}}\log(\nu_i(A))\nu_i(A)\\&\ge -\displaystyle\sum_{\nu_i(A)\leq\frac{1}{e}}\log(\frac{|S'_i|}{|S_i|}\nu'_i(A))\frac{|S'_i|}{|S_i|}\nu'_i(A)
\\&=-\frac{|S'_i|}{|S_i|}\displaystyle\sum_{\nu_i(A)\leq\frac{1}{e}}\log(\nu'_i(A))\nu'_i(A)-\frac{|S'_i|}{|S_i|}\log{\frac{|S'_i|}{|S_i|}}\displaystyle\sum_{\nu_i(A)\leq\frac{1}{e}}\nu'_i(A)
\\&\geq \frac{|S'_i|}{|S_i|}\Bigl\{H_{\nu'_i}(\cP)+\displaystyle\sum_{\nu_i(A)>\frac{1}{e}}\log(\nu'_i(A))\nu'_i(A)\Bigr\}\\&\ge (1-\beta^{\frac{1}{2}})(H_{\nu'_i}(\cP)-\frac{3}{e}).
\end{aligned}
}
In the last inequality, we use the fact that $\nu'_i$ is a probability measure, thus there can be at most three elements $A$ of the partition for which $\nu'_i (A) > \frac{1}{e}$.
 If $P$ is any non-empty atom of $\cP^{(k_i)}$, fixing any $y_0\in P$, any $y\in S'_i\cap P=S'_i\cap [y_0]_{\cP^{k_i}}$ satisfies
\begin{align*}
  r_0>\mathbf{d}(a^k y_0, a^k y)\ge C'e^{(m+n)k}\mathbf{d}(y_0,y)\ge C'e^{(m+n)(1-\beta^{\frac{1}{2}})k_i}\mathbf{d}(y_0,y)  
\end{align*}
for some constant $C'>0$.
Here, we used the right invariant property of $\bf{d}$ and bi-Lipshitz property between $\bf{d}$ and $||\cdot||$. Thus $S'_i\cap P$ can be covered by one ball of $C'^{-1}r_0e^{-(m+n)(1-\beta^{\frac{1}{2}})k_i}$-radius for metric $\bf{d}$ as well as by $C_1 e^{-(m+n)mn\beta^{\frac{1}{2}} k_i}$ many balls of $r_0e^{-(m+n)k_i}$-radius for the metric $\bf{d}$ and some constant $C_1>0$. Since $S'_i$ is $e^{(m+n)k_i}$-separated with respect to $\bf{d}$, we get 
$$Card(S'_i \cap [y_0]_{\cP^{(k_i)}})\leq C_1 e^{(m+n)mn\beta^{\frac{1}{2}} k_i},$$
and therefore 
$$H_{\nu'_i}(\cP^{(k_i)})\ge \log |S'_i|-(m+n)mn\beta^{\frac{1}{2}} k_i-\log C_1.$$
Now we can estimate the lower bound of the entropy. For $q\ge 1$, write the Euclidean division of large enough $k_i-1$ by $q$ as
$$k_i-1=qk'+s \ \textrm{with} \ s\in\set{0,\cdots,q-1}.$$
By subadditivity of the entropy with respect to the partition, for each $p\in\set{0,\cdots,q-1}$,
$$H_{\nu_i}(\cP^{(k_i)}|\cB_Y^U)\leq H_{a^{p}\nu_i}(\cP^{(q)}|\cB_Y^U)+\cdots+H_{a^{p+qk'}\nu_i}(\cP^{(q)}|\cB_Y^U)+2q\log |\cP|.$$
Summing those inequalities for $p=0,\cdots,q-1$, and using the concave property of entropy with respect to the measure, we obtain
\begin{align*}
  qH_{\nu_i}(\cP^{(k_i)}|\cB_Y^U)
  &\leq\displaystyle\sum_{k=0}^{k_i-1}H_{a^k \nu_i}(\cP^{(q)}|\cB_Y^U)+2q^2\log |\cP|\\
  &\leq k_iH_{\mu_i}(\cP^{(q)}|\cB_Y^U)+2q^2\log |\cP|
\end{align*}
and therefore
\begin{align*}
    \frac{1}{q}H_{\mu_i}(\cP^{(q)}|\cB_Y^U)
    &\ge \frac{1}{k_i}H_{\nu_i}(\cP^{(k_i)}|\cB_Y^U)-\frac{2q\log |\cP|}{k_i}\\
    &\ge \frac{1}{k_i}\Bigl\{(1-\beta^{\frac{1}{2}})(H_{\nu'_i}(\cP^{(k_i)})-\frac{3}{e})-2q\log|\cP|\Bigr\}\\
    &\ge \frac{1}{k_i}\Bigl\{(1-\beta^{\frac{1}{2}})(\log |S'_i|-mn(m+n)\beta^{\frac{1}{2}} k_i -\log C_1-\frac{3}{e}) - 2q\log|\cP|\Bigr\}
\end{align*}
Here, for the second inequality, we used inequality \eqref{eqn2} and the fact that $$H_{\nu_i}(\cP^{(q)}|\cB_Y^U)=H_{\nu_i}(\cP^{(q)})$$ which holds as $\nu_i$ is supported on an atom of $\cB_Y^U$. Now we can take $i\to\infty$ because the atoms $P$ of $\cP$ and hence of $\cP ^{(q)}$, satisfy $\mu(\partial P)=0$. Thus we obtain the inequality
\begin{align*}
    \frac{1}{q}H_{\mu}(\overline{\cP}^{(q)}|\cB_{\Bar{Y}}^U)
    &\ge (1-\beta^{\frac{1}{2}})(m+n)\{(\dim_H \mb{Bad}^{'}(\eps)-\gamma)-mn\beta^{\frac{1}{2}}\},
\end{align*}
from the inequality \eqref{eqn1} and finally get the inequality
$$\frac{1}{q}H_{\mu}(\overline{\cP}^{(q)}|\cB_{\Bar{Y}}^U)
    \ge (1-\mu(Y\setminus K)^{\frac{1}{2}})(m+n)\{(\dim_H \mb{Bad}^{'}(\eps)-\gamma)-mn\mu(Y\setminus K)^{\frac{1}{2}}\}$$
we desired by taking $\rho \to 0$.
\end{proof}

\section{Proof of main results}
\subsection{Maximal entropy implies invariance}
To deduce the invariance property of the measure we constructed in Section 2, we need the following proposition about maximal entropy.
\begin{prop}[Maximal entropy implies $U$-invariance]
\label{prop3}
Let $\mu$ be an $a_t$-invariant probability measure on $Y$.
Then 
\[ h_\mu(a|\cB_Y^U) \leq \log|det(Ad_a|_\bu)| \]
with equality if and only if $\mu$ is $U$-invariant.
\end{prop}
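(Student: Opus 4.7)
The plan is to prove this via the Margulis--Ruelle entropy bound for a horospherical subgroup, following the standard framework developed by Einsiedler--Katok--Lindenstrauss and Einsiedler--Lindenstrauss. The first step is to build a countable measurable partition $\cA$ of $Y$ that is \emph{subordinate} to $U$: for $\mu$-a.e.\ $y$, the atom $[y]_\cA$ is an open bounded neighborhood of $y$ inside the orbit $Uy$. Such partitions exist by a standard construction using a locally finite cover by flow boxes of the form $B_U(r)\cdot V$, where $V$ is a small transversal to $U$. Because $\Ad_{a}|_{\bu} = e^{(m+n)}\mathrm{Id}_{\bu}$ is expanding, pulling back by $a$ shrinks $U$-atoms, so $a^{-1}\cA$ refines $\cA$ (mod $\mu$), i.e.\ $\cA$ is $a$-decreasing, and $\bigvee_{k\ge 0} a^{-k}\cA$ generates $\cB_Y$ relative to $\cB_Y^U$.

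Second, I would identify the relative entropy with an integral over conditional measures. Using the standard identity for subordinate partitions,
\eq{h_\mu(a \mid \cB_Y^U) \;=\; H_\mu(\cA \mid a\cA) \;=\; -\int \log \mu_y^{a\cA}\bigl([y]_\cA\bigr)\, d\mu(y),}
where $\mu_y^{a\cA}$ denotes the conditional measure of $\mu$ on the atom $[y]_{a\cA}$. The atom $[y]_{a\cA}$ is comparable to a $U$-ball of $\Ad$-dilated radius, and by $a$-invariance one has $a_*\mu^\cA_{a^{-1}y} = \mu^{a\cA}_y$.

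Third, I apply Jensen's inequality (equivalently, the elementary bound $H(\nu) \le \log|\mathrm{supp}(\nu)|$ with respect to Haar on $U$). Let $\lam_U$ be Haar measure on $U$ and, up to a regularization, write $\mu_y^{a\cA} = f_y\, \lam_U$ on the atom $[y]_{a\cA}$ with $\int f_y\, d\lam_U = 1$. Since the atom $[y]_{a\cA}$ has $\lam_U$-volume equal to $|\det(\Ad_a|_\bu)|$ times that of $[y]_\cA$, Jensen gives
\eq{-\log \mu_y^{a\cA}\bigl([y]_\cA\bigr) \;\le\; \log|\det(\Ad_a|_\bu)| \;+\; \frac{1}{\lam_U([y]_{a\cA})}\int_{[y]_{a\cA}} \log\!\bigl(f_y\,\lam_U([y]_{a\cA})\bigr) f_y\, d\lam_U,}
and since the correction term is non-positive (again by Jensen applied to $-\log$), integrating against $\mu$ yields
\eq{h_\mu(a \mid \cB_Y^U) \;\le\; \log|\det(\Ad_a|_\bu)|.}

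The main subtlety, and what I expect to be the hardest step, is the \emph{equality case}. Equality in the two applications of Jensen above forces, for $\mu$-a.e.\ $y$, the density $f_y$ to be constant on the atom $[y]_{a\cA}$, i.e.\ the conditional measure $\mu^{a\cA}_y$ is proportional to Haar $\lam_U$ on its atom. Iterating this with $a^k\cA$ for all $k \ge 0$ and using the fact that the atoms of $a^k\cA$ exhaust $Uy$ as $k\to\infty$ (because $\Ad_a$ expands $\bu$ uniformly), one concludes that the leafwise measures (Einsiedler--Lindenstrauss) of $\mu$ along $U$ are proportional to Haar. By the standard criterion, this is equivalent to $U$-invariance of $\mu$. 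Conversely, if $\mu$ is $U$-invariant then $f_y$ is identically constant and the two Jensen inequalities are equalities, giving the full characterization.
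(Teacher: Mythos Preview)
Your strategy is exactly the Einsiedler--Lindenstrauss framework that the paper invokes: the paper does not reprove the proposition but cites \cite{EL} (combining Proposition~7.34 and Theorem~7.9, applied to $a^{-1}$ so that $U$ lies in the stable horospherical) and refers to \cite{LSS}, Chapter~3, for the detailed argument, the only change being that the relative entropy is taken with respect to $\cB_Y^U$ rather than $\pi^{-1}(\cB_X)$. So at the level of approach you and the paper agree.

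There is, however, one genuine gap in your sketch. In step three you write ``up to a regularization, $\mu_y^{a\cA} = f_y\,\lam_U$'' and then run Jensen on the density $f_y$. This presupposes that the conditional measures on $U$-leaves are absolutely continuous with respect to Haar, which is precisely what you are trying to \emph{conclude} in the equality case, and which is generally false for the inequality. The standard argument avoids this circularity: for the upper bound one uses a covering count (an atom of $a\cA$ meets at most $C\,|\det(\Ad_a|_\bu)|$ atoms of $\cA$, whence $H_\mu(\cA\mid a\cA)\le \log|\det(\Ad_a|_\bu)|+O(1)$, and one removes the $O(1)$ by iterating), and for the equality case one works directly with the leaf-wise measures $\mu_y^U$ in the sense of \cite{EL}, showing that the entropy contribution equals $\log|\det(\Ad_a|_\bu)|$ forces $\mu_y^U$ to be translation-invariant on $U$, hence Haar, without ever positing a density. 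Once you replace the $f_y$-computation by this argument your outline goes through and matches what the cited references do.
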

Note that for our situation $a_t=\Delta (e^{nt}\mathbf{1}_m,e^{-mt}\mathbf{1}_n)$ in $SL_d(\bR)$, the restriction of the adjoint map $Ad_a|_\bu$ can be considered as the map $A\rightarrow e^{-(m+n)}A$ for $A\in M_{m,n}(\bR)$ by identifying $\bu \simeq M_{m,n}(\bR)$, so the maximal entropy is $\log|det(Ad_a|_\bu)|=(m+n)mn$.

\begin{defi}[7.25. of \cite{EL}]
Let $G^-\defn \set{g\in G|a_t g a_t^{-1}\to e \ \textrm{as} \ t\to\infty}$ be the stable horospherical subgroup associated to $a$. Let $\mu$ be an $a$-invariant measure on $Y$ and $U<G^-$ be a closed $a$-normalized subgroup.
\begin{enumerate}
\item We say that a countably generated $\s$-algebra $\cA$ is \emph{subordinate to $U^+$} (mod $\mu$) if for $\mu$-a.e. $y$, there exists $\de > 0$ such that
$$ B^{U^+}_\de\cdot y \subset [y]_{\cA} \subset B^{U^+}_{\de^{-1}}\cdot y.$$ 
\item
We say that $\cA$ is \emph{$a$-descending} if $a^{-1}\cA \subset \cA$.
\end{enumerate}
\end{defi}

The proof of Proposition \ref{prop3} is based on the following theorem applied to $a^{-1}$ so that $U<G^-$. To obtain following theorem, it is enough to consider the case that $\mu$ is ergodic by using ergodic decomposition. Combining Proposition 7.34 and Theorem 7.9 of \cite{EL}, we obtain the following statement under the ergodicity assumption.
\begin{thm}[Einsiedler-Lindenstrauss]\label{thmEL}
Let $\mu$ be an $a_t$-invariant ergodic probability measure on $Y$. If $\cA$ is a countably generated sub-$\sigma$-algebra of the Borel $\sigma$-algebra which is $a$-descending and $U$-subordinate, then $$H_\mu(\cA|a^{-1}\cA)\leq -\log|det(Ad_a|_\bu)|.$$
\end{thm}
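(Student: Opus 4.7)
My plan is to follow the leafwise-measures technology developed by Einsiedler and Lindenstrauss. The idea is to reinterpret the conditional entropy $H_\mu(\cA|a^{-1}\cA)$ as the integral of a logarithmic cocycle on $U$-orbits, and then bound it via Jensen's inequality together with the $a$-invariance of $\mu$.

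First, by the general theory of leafwise measures (Sections 6--7 of \cite{EL}), for $\mu$-a.e.\ $y\in Y$ one produces a locally finite, $\sigma$-finite measure $\mu_y^U$ on $U$, unique up to a positive scalar and measurably depending on $y$, which encodes the restriction of $\mu$ to $U$-orbits. Fixing a normalization (say $\mu_y^U(B_1^U)=1$ for the unit ball $B_1^U\subset U$), the $a$-invariance of $\mu$ translates into a measurable cocycle relation
\[
(\theta_a)_*\mu_y^U \;=\; \alpha(y)\,\mu_{ay}^U,\qquad \theta_a(u):=aua^{-1},
\]
for some $\alpha(y)>0$.

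Next, $U$-subordinacy identifies $\cA$-atoms with plaques in $U$: there is an open bounded $V_y\subset U$ containing the identity with $[y]_\cA=V_y\cdot y$ and $\mu_y^\cA = \mu_y^U|_{V_y}/\mu_y^U(V_y)$. The $a$-descending property together with $[y]_{a^{-1}\cA}=a^{-1}[ay]_\cA$ produces the larger plaque $V_y^{(-1)}:=Ad_{a^{-1}}(V_{ay}) \supseteq V_y$, the containment reflecting that $a^{-1}$ expands $U$ since $U<G^-$. Unwinding the definition of conditional entropy gives
\[
H_\mu(\cA|a^{-1}\cA) \;=\; \int_Y \log\frac{\mu_y^U(V_y^{(-1)})}{\mu_y^U(V_y)}\,d\mu(y).
\]
Applying the covariance $\mu_y^U(V_y^{(-1)})=((\theta_a)_*\mu_y^U)(V_{ay})=\alpha(y)\mu_{ay}^U(V_{ay})$ and using $a$-invariance of $\mu$ to cancel the contributions $\int\log\mu_y^U(V_y)\,d\mu$ against $\int\log\mu_{ay}^U(V_{ay})\,d\mu$ leaves
\[
H_\mu(\cA|a^{-1}\cA) \;=\; \int_Y \log\alpha(y)\,d\mu(y).
\]

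Finally, I compare $\alpha(y)$ with the Jacobian of $Ad_{a^{-1}}$ on $\bu$: under the chosen normalization, $\alpha(y)=\mu_y^U(Ad_{a^{-1}}(B_1^U))$, and since the Haar volume of $Ad_{a^{-1}}(B_1^U)$ equals $|\det(Ad_a|_\bu)|^{-1}$ times that of $B_1^U$, a standard averaging argument exploiting $\mu$-invariance and the local-Haar nature of leafwise measures yields $\int \alpha(y)\,d\mu(y) \le |\det(Ad_a|_\bu)|^{-1}$. Jensen's inequality for the concave function $\log$ then gives
\[
H_\mu(\cA|a^{-1}\cA) \;\leq\; \log\int_Y \alpha(y)\,d\mu(y) \;\leq\; -\log|\det(Ad_a|_\bu)|,
\]
as required. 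The principal obstacle is the rigorous construction and covariance property of the leafwise measures $\mu_y^U$, which requires the full machinery of \cite{EL}; a secondary difficulty is justifying the cancellation of the logarithmic terms above, since $\log\mu_y^U(V_y)$ need not be a priori integrable, necessitating a truncation-and-limit argument over nested bounded neighborhoods in $U$.
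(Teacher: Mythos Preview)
The paper does not prove this theorem; it is quoted as the combination of Proposition~7.34 and Theorem~7.9 of \cite{EL}. Your outline follows the leafwise-measure formalism of \cite{EL} and is on track through the identity $H_\mu(\cA\mid a^{-1}\cA)=\int_Y\log\alpha(y)\,d\mu(y)$ (the integrability issue you flag is genuine but standard to handle).

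The real gap is the last step. You claim $\int\alpha\,d\mu\le|\det(Ad_a|_\bu)|^{-1}$ by invoking ``the local-Haar nature of leafwise measures'', but leafwise measures are Haar \emph{exactly} when $\mu$ is $U$-invariant --- that is the content of the equality case you are trying to prove --- and for general $\mu$ they carry no such structure. Nothing forces the $\mu$-average of $\alpha(y)=\mu_y^U(Ad_{a^{-1}}B_1^U)$ to be dominated by the Haar volume ratio; $\alpha(y)$ can exceed $|\det(Ad_a|_\bu)|^{-1}$ pointwise, and there is no ``standard averaging argument'' that converts $\mu$-invariance alone into this $L^1$ bound. Jensen applied to $\int\alpha$ is therefore not how the inequality is obtained.

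The argument in \cite{EL} is instead a covering count combined with a limit in $n$. From $U$-subordinacy one has $B_\delta^U\subset V_z\subset B_{\delta^{-1}}^U$ on a set of measure close to $1$; hence the $a^{-n}\cA$-atom $Ad_{a^{-n}}(V_{a^ny})\cdot y$ sits in a $U$-plaque of Haar volume at most $C_\delta\,|\det(Ad_a|_\bu)|^{-n}$ and so meets at most $C'_\delta\,|\det(Ad_a|_\bu)|^{-n}$ distinct $\cA$-atoms. The finite-partition Jensen inequality gives $H_\mu(\cA\mid a^{-n}\cA)\le -n\log|\det(Ad_a|_\bu)|+O_\delta(1)$, and since $H_\mu(\cA\mid a^{-n}\cA)=n\,H_\mu(\cA\mid a^{-1}\cA)$ by $a$-invariance and the chain rule, dividing by $n$ and sending $n\to\infty$ removes the constant. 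In your cocycle language this is the bound $\frac{1}{n}\sum_{k=0}^{n-1}\log\alpha(a^ky)=\frac{1}{n}\log\mu_y^U(Ad_{a^{-n}}B_1^U)\le -\log|\det(Ad_a|_\bu)|+\tfrac{1}{n}O_\delta(1)$, from which the result follows via the ergodic theorem rather than a one-step Jensen bound on $\int\alpha$.
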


For detailed proof of Proposition \ref{prop3} using Theorem \ref{thmEL}, we refer the reader to \cite{LSS}, Chapter 3. The only difference here is that we calculate relative entropy with respect to the $\sig$-algebra $\cB_Y^U$ instead of the $\sig$-algebra $\pi^{-1}(\cB_X)$ in \cite{LSS}.



\subsection{Proof of Theorem \ref{thm0}}
In this subsection, to prove Theorem \ref{thm0}, we investigate the closed set $\cL_\eps$. Measures supported on the set $\cL_\eps$ only admit few invariance properties, as stated in the following proposition.
\begin{prop}\label{prop4}
Let $\mu\in\cP(Y)$ be a measure which is $a_t$-invariant and $U$(or $W$)-invariant. Then $\mu$ cannot be supported on $\cL_\eps$ for any $\eps>0$.  
\end{prop}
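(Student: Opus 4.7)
The plan is to argue by contradiction. Suppose $\mu$ is $a_t$-invariant, $W$-invariant (the $U$-case is analogous), and supported on $\cL_\eps$, and derive an impossibility from the geometry of the grids $\Gamma_y$. Write $r = \eps^{m/(m+n)}$.

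The first step would be to turn horospherical invariance into a pointwise geometric constraint on $\Gamma_y$. Since $\cL_\eps$ is closed, $\mu(\cL_\eps)=1$, and $\mu$ is $W$-invariant, Fubini applied to the $W$-invariant product measure on $Y \times \R^m$ shows that for $\mu$-a.e.\ $y$ the set $\{b \in \R^m : w(b)y \in \cL_\eps\}$ has full Lebesgue measure; its complement is open (because $\cL_\eps$ is closed and $b \mapsto w(b)y$ is continuous) and of measure zero, hence empty. Unwinding the action $w(b)\cdot(p,q) = (p+b,q)$ on grid vectors, the statement "$w(b)y \in \cL_\eps$ for every $b \in \R^m$" is equivalent to
\[
\Gamma_y \cap \bigl(\R^m \times B^{\R^n}_r(0)\bigr) = \emptyset,
\]
i.e.\ every $(p,q) \in \Gamma_y$ satisfies $\|q\| \ge r$. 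The $U$-case is analogous: $u(A)$ acts as the shear $(p,q) \mapsto (p+Aq,q)$, and for any $(p,q) \in \Gamma_y$ with $q \ne 0$ the equation $Aq = -p$ is solvable for $A \in M_{m,n}(\R)$; so requiring $u(A)y \in \cL_\eps$ for every $A$ forces every $(p,q) \in \Gamma_y$ with $q \ne 0$ to satisfy $\|q\| \ge r$.

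Next, I would combine with $a$-invariance. Let $E$ denote the full-measure set of $y$ satisfying the constraint from the previous step. By $a$-invariance, $\bigcap_{k \ge 0} a^{-k}E$ still has full $\mu$-measure. For $y$ in this intersection, the constraint holds at $a^k y$ for every $k \ge 0$, and since $\Gamma_{a^k y} = a^k \Gamma_y$ with $a^k \cdot (p,q) = (e^{nk}p,\, e^{-mk}q)$, it pulls back to: every $(p,q) \in \Gamma_y$ (in the $U$-case, every such vector with $q \ne 0$) satisfies $\|q\| \ge e^{mk} r$ for all $k \ge 0$. Letting $k \to \infty$ finishes the proof. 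In the $W$-case the grid $\Gamma_y$ would contain no vectors at all, contradicting that a unimodular grid is nonempty; in the $U$-case one is forced to have $\Gamma_y \subseteq \R^m \times \{0\}$, which is impossible since $\Gamma_y$, as a translate of a rank-$d$ lattice in $\R^{m+n}$, affinely spans $\R^{m+n}$.

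The only real delicacy is upgrading "for a.e.\ $b$" (resp.\ a.e.\ $A$) to "for every $b$" (resp.\ every $A$), which I intend to handle via the closedness of $\cL_\eps$ plus the elementary fact that a closed subset of Euclidean space of full Lebesgue measure is the whole space. Beyond that I do not foresee a serious obstacle; the key conceptual point is simply that $W$ and $U$ are both expanded by $a_t$ in the direction that shrinks the $\R^n$-component of grid vectors, so any invariance forces grids to be emptier than a lattice can possibly be.
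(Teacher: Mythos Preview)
Your argument is correct and constitutes a genuinely different proof from the paper's. The paper proceeds pointwise and topologically: it shows that for \emph{every} $y\in Y$ one can find $t\in\R$ and $u\in U$ (resp.\ $w\in W$) with $a_t u y\notin\cL_\eps$, by an explicit matrix computation splitting into the cases $b_2\ne 0$ (take $A$ with $b_1+Ab_2=0$ and $t\to+\infty$) and $b_2=0$ (take $t\to-\infty$); then, using that $\cL_\eps$ is closed, it pulls back a small ball in the complement through $(a_t u)^{-1}$ to get $\mu(B_{r'_y}(y))=0$ for every $y$, and a countable subcover finishes. Your approach is measure-theoretic: Fubini plus the ``open null set is empty'' trick upgrades invariance to the pointwise constraint $\|q\|\ge r$ for all grid vectors $(p,q)$ (with $q\ne 0$ in the $U$-case), and iterating $a$ forward pushes $r$ to infinity, contradicting that a unimodular grid is a translate of a full-rank lattice. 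Your route isolates a clean geometric obstruction and uses only forward times, while the paper's route is slightly more elementary (no Fubini) but needs both $t\to\pm\infty$ and the covering step. Both are short and self-contained; neither dominates the other.
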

\begin{proof}
Assume that $\Supp\mu \subseteq \cL_\eps$ for some $\eps>0$, then $\mu(Y\setminus \cL_\eps)=0$ holds. First we claim that for any $y\in Y$, ther exist $t\in \bR$ and $u\in U$ satisfying $a_t uy\in Y\setminus \cL_\eps$. Let $y=
\left(\begin{matrix}
X_1 & X_2 & b_1\\
X_3 & X_4 & b_2\\
0 & 0 & 1\\
\end{matrix}\right)G(\bZ)
$, then for $u=u(A)$ and
$a_t$, we have
\begin{align*}
    a_t uy&=
    \left(\begin{matrix}
e^{nt}I_m & 0 & 0 \\
0 & e^{-mt}I_n &  0\\
0 & 0 & 1\\
\end{matrix}\right)
\left(\begin{matrix}
I_m & A & 0\\
0 & I_n & 0\\
0 & 0 & 1\\
\end{matrix}\right)
\left(\begin{matrix}
X_1 & X_2 & b_1\\
X_3 & X_4 & b_2\\
0 & 0 & 1\\
\end{matrix}\right)G(\bZ)\\
&=
\left(\begin{matrix}
e^{nt}X_1 & e^{nt}(X_2+AX_4) & e^{nt}(b_1+Ab_2)\\
e^{-mt}X_3 & e^{-mt}X_4 & e^{-mt}b_2\\
0 & 0 & 1\\
\end{matrix}\right)G(\bZ).
\end{align*}
We consider two cases to prove the claim:\\
$\mathbf{Case\ 1)}$ $b_2\neq 0$.\\
In this case, we can find a matrix $A\in M_{m,n}(\bR)$ satisfying $b_1+Ab_2=0$. 
Then the translation vector of the grid from the origin, which is the first $m+n$ entries of the last column, is $(\mathbf{0}_m,e^{-mt}b_2)$. Thus the vector $(\mathbf{0}_m,e^{-mt}b_2)$ is contained in the grid $\Gamma_{a_t uy}$: By taking $t\to\infty$, we can choose $t>>0$ with $||e^{-mt}b_2||<\eps^{\frac{m}{m+n}}$, which gives $\Gamma_{a_t uy}\cap B_{\eps^{\frac{m}{m+n}}}^{\bR^d}(0)\neq \phi$.\\
$\mathbf{Case\ 2)}$ $b_2= 0$.\\
In this case, the last column is $(e^{nt}b_1,\mathbf{0}_n)$. We can make this translation vector small enough by taking $t\to -\infty$. Thus we get $\Gamma_{a_t uy}\cap B_{\eps^{\frac{m}{m+n}}}^{\bR^d}(0)\neq \phi$ as in the Case 1.\\
Combining these two cases, we arrived at the claim.\\
Similarly, for any $y\in Y$, there exist $t\in\bR$ and $w\in W$ satisfying $a_t wy\in Y\setminus\cL_\eps$.
Indeed, for $w\in W$, let $w=
\left(\begin{matrix}
I_m & 0 & c\\
0 & I_n & 0\\
0 & 0 & 1\\
\end{matrix}\right)$, then
$$a_t wy=\left(\begin{matrix}
e^{nt}X_1 & e^{nt}X_2 & e^{nt}(b_1+c)\\
e^{-mt}X_3 & e^{-mt}X_4 & e^{-mt}b_2\\
0 & 0 & 1\\
\end{matrix}\right)G(\bZ)$$ holds and we can apply previous argument by taking $c=-b_1$ and $t\to\infty$.

Since $\cL_\eps$ is closed, for every $y\in\bR$, there exist $r_y>0$, $t\in\bR$, and $u\in U$ such that the $d$-ball $B_{r_y}(a_t uy)\subseteq Y\setminus \cL_\eps$. Choose $r'_y>0$ such that $B_{r'_y}(y)\subseteq (a_t u)^{-1}B_{r_y}(a_t uy)$. Then for  $a_t$-invariant and $U$-invariant measure $\mu$,
\begin{align*}
    \mu(B_{r'_y}(y))\leq \mu((a_t u)^{-1}B_{r_y}(a_t uy))
    =\mu(B_{r_y}(a_t uy))
    \leq \mu(Y\setminus \cL_\eps)=0.
\end{align*}
Covering $Y$ by balls $B_{r'_y}(y)$, we obtain 
$$\mu(Y)=\mu(\displaystyle\bigcup_{y\in Y}B_{r'_y}(y))\leq 0,$$
which gives a contradiction. We also prove the $W$-invariant case by replacing $u\in U$ with $w\in W$.

\end{proof}

\begin{proof}[Proof of Theorem \ref{thm0}]
Suppose that for some fixed $\eps>0$, $\dim_H \mb{Bad}^{b_j}(\eps)=mn$. Take a sequence of real numbers $\set{\gamma_j}$ which converge to zero. Now apply Proposition \ref{prop2} for $\gamma_j>0$. Note that the corresponding $\eta_j=\frac{mn}{m+n}(mn-\dim_H \mb{Bad}^{b_j}(\eps)+\gamma_j)=\frac{mn \gamma_j}{m+n}$ goes to zero as $j\to\infty$. Then we have a sequence of probability measures $\set{\mu_j}$ in $\crly{P}(\Bar{Y})$ satisfying the four conditions in Proposition \ref{prop2}. Now, there exists a probability measure $\mu\in\crly{P}(\bar{Y})$ which is a weak*-accumulation point of $\set{\mu_j}$. Let
$$\mu_j\wstar\mu\in\crly{P}(\bar{Y})$$
by extracting a subsequence if necessary.
Clearly $\mu$ is $a$-invariant and $\Supp\mu\subseteq \cL_\eps$. Since $\mu(\bar{Y}\setminus Y)=\displaystyle\lim_{j\to\infty}\mu_j(\bar{Y}\setminus Y)=\displaystyle\lim_{j\to\infty}\eta_j=0$, we may consider $\mu$ as a probability measure on $Y$. 

Let us show that $\mu$ has maximal entropy. For any compact set $K\subset Y$, we can build a finite partition $\cP$ satisfying:
\begin{enumerate}
        \item\label{item1} $\cP$ contains an atom $P_\infty$ where $K \subseteq Y\smallsetminus P_\infty$,
        \item\label{item2} $\forall P\in \cP\smallsetminus\set{P_\infty}$, $\diam P<r_0$ for some $0<r_0<\frac{1}{2}$,
        \item\label{item3} $\forall P\in\cP \ \textrm{and}\ j\in\bN, \mu_j(\partial P)=0$.
    \end{enumerate}
It is possible to build this partition to satisfy the part (\ref{item3}) because the collection of measure $\set{\mu_j}$ is countable. Thus we have
$$ \frac{1}{q}H_{\mu_j}(\Bar{\cP}^{(q)}|\cB_{\Bar{Y}}^U)\ge(1-\mu_j(Y\setminus K)^{\frac{1}{2}})(m+n)(\dim_H \mb{Bad}^{'}(\eps)-\gamma_j-mn\mu_j(Y\setminus K)^{\frac{1}{2}})$$
for all $q,j\in\bN$. Since $\cP^{(q)}$ is finite, we can take $j\to\infty$ to obtain
$$\frac{1}{q}H_{\mu}(\cP^{(q)}|\cB_Y^U)\ge(1-\mu(Y\setminus K)^{\frac{1}{2}})^2 (m+n)mn$$
for all $q\in\bN$. Thus $h_\mu(a|\cB_Y^U)\ge(1-\mu(Y\setminus K)^{\frac{1}{2}})^2 (m+n)mn$ holds for any compact set $K\subset Y$, and eventually we have $h_\mu(a|\cB_Y^U)=(m+n)mn$ which is the maximal entropy with respect to $\cB_Y^U$. It follows that $\mu$ is $U$-invariant by Proposition \ref{prop3}. We arrived at the desired contradiction by using proposition \ref{prop4} since we showed that $\mu$ is a probability measure on $Y$ which is $a_t$-invariant, $U$-invariant, and supported on $\cL_\eps$.
\end{proof}

\subsection{Proof of Theorem \ref{thm2} and \ref{thm3}}\label{sec:5}
Before we start the proof of Theorem \ref{thm2} and \ref{thm3}, we construct a measure on $Y$ with large relative entropy as we did in Proposition \ref{prop2}. However, in this case, we will calculate entropy relative to the factor $X$, instead of the $\sigma$-algebra $\cB_Y^U$ we used in previous chapters. For any coutable partition $\cP$ of $Y$, $H_\mu(\cP|X)$ will denote the relative entropy of $\cP$ with respect to the $\sigma$-algebra $\pi^{-1}(\cB_X)$ where $\cB_X$ is the Borel $\sigma$-algebra on $X$. Similarly, for $\mu\in\crly{P}(\Bar{Y})$, $H_\mu(\cP|\Bar{X})$ will denote the relative entropy with respect to the $\pi^{-1}(\cB_{\Bar{X}})$ where $\cB_{\Bar{X}}$ is the Borel $\sigma$-algebra on $\Bar{X}$. Note that $H_\mu(\cP^{(q)}|X)=H_\mu(\overline{\cP}^{(q)}|\Bar{X})$ for $q\in\bN$ and $\mu\in\crly{P}(Y)$. Also denote by $h_\mu(a|X)$ and $h_\mu(a|\Bar{X})$ the relative entropy of the transformation $a$ with respect to $X$ and $\Bar{X}$, respectively. The following proposition is analogous to Proposition \ref{prop2} as well as its proof except a slight difference on the construction of measure.

\begin{prop}\label{prop5} For $A\in M_{m,n}(\R)$ fixed,
let $$\eta_0=\sup\set{\eta:x_A \ \textrm{has} \ \eta\textrm{-escape of mass on average}}.$$ Then for any $\eps>0$ and $\gamma>0$, there exist an $a_t$-invariant probability measure $\mu^\gamma\in \crly{P}(\bar{Y})$ satisfying:
\begin{enumerate}
    \item\label{supp'} $\Supp{\mu^\gamma}\subseteq \cL_\eps$,
    \item\label{cusp'} $\mu^\gamma(\Bar{Y}\setminus Y)=\eta_0$,
    \item\label{entropy'}If $\cP$ is any finite partition of $Y$ satisfying:
    \begin{itemize}
        \item $\cP$ contains an atom $P_\infty$ of the form $\pi^{-1}(P_\infty^0)$, where $X\smallsetminus P_\infty^0$ has compact closure,
        \item $\forall P\in \cP\smallsetminus\set{P_\infty}$, $\diam P<r_0$ for some $0<r_0<\frac{1}{2}$,
        \item $\forall P\in\cP, \mu^\gamma(\partial P)=0$,
    \end{itemize}
    then, for all $q\ge 1$,
    $$\frac{1}{q}H_{\mu^\gamma}(\overline{\cP}^{(q)}|\Bar{X})\ge n(\dim_H \mb{Bad}_A(\eps)-\gamma)-mn\mu^\gamma(\overline{P_\infty}).$$
 
\end{enumerate}
\end{prop}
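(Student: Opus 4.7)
The plan is to follow the template of Proposition~\ref{prop2} closely, with two substitutions throughout: the unstable direction $U$ (of dimension $mn$ and Lyapunov exponent $m+n$) is replaced by $W$ (dimension $m$, Lyapunov exponent $n$), since we now vary $b$ rather than $A$; and the conditioning $\sigma$-algebra becomes $\pi^{-1}(\cB_{\bar X})$ in place of $\cB_{\bar Y}^U$. Imitating Proposition~\ref{prop1}, I would discard the zero-dimensional subset of $\mb{Bad}_A(\eps)$ on which $\idist{Aq-b}=0$ for some $q\in\bZ^n$, set $R_A^T\defn\{b:a_tx_{A,b}\in\cL_\eps\ \forall t\ge T\}$, and pick $T_\gamma$ with $\dim_H R_A^{T_\gamma}>\dim_H\mb{Bad}_A(\eps)-\gamma$.

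The essential new ingredient, replacing the KKLM input of Proposition~\ref{prop2}, is that the fibres of $\pi:Y\to X$ are compact tori $\bR^d/\bZ^d$. Hence $a^k x_{A,b}\in\pi^{-1}(Q)$ iff $a^k x_A\in Q$, so the escape of mass along $(a^k)$ is the same for every $x_{A,b}$ and equal to that of $x_A$. Using both directions of the definition $\eta_0=\sup\{\eta:x_A\text{ has }\eta\text{-escape on average}\}$, I would extract $k_i\to\infty$ diagonally so that for an exhausting sequence of compacts $Q_j\nearrow X$ the limits $\alpha_j\defn\lim_i\frac{1}{k_i}|\{k<k_i:a^k x_A\notin Q_j\}|$ exist and satisfy $\alpha_j\searrow\eta_0$. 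With these $k_i$ in hand I take $S_i\subset\phi_A(R_A^{T_\gamma})$ a maximal $e^{-nk_i}$-separated set in $\mathbf d$, where $\phi_A(b)\defn x_{A,b}$ is bi-Lipschitz onto its image via the $W$-parametrization of scaling $n$. The dimension inequality then yields
\[\liminf_i\frac{\log|S_i|}{k_i}\ge n\dim_H R_A^{T_\gamma}>n(\dim_H\mb{Bad}_A(\eps)-\gamma),\]
and $\mu^\gamma\in\crly{P}(\bar Y)$ is obtained as a weak-$*$ accumulation point of $\mu_i\defn\frac{1}{k_i}\sum_{k=0}^{k_i-1}a^k_*\left(\frac{1}{|S_i|}\sum_{y\in S_i}\delta_y\right)$. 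Property~(\ref{supp'}) is immediate from $a^k y\in\cL_\eps$ for $k>T_\gamma$; property~(\ref{cusp'}) follows from Portmanteau applied to the saturated compacts $K_j=\pi^{-1}(Q_j)$ combined with $\alpha_j\searrow\eta_0$ (the closed half gives $\mu^\gamma(\bar Y\setminus K_j)\le\alpha_j$, the open half gives $\ge\alpha_j$, and letting $j\to\infty$ absorbs $\mu^\gamma(Y\setminus K_j)\to 0$ to force equality with $\eta_0$).

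The entropy bound~(\ref{entropy'}) is the main technical step. A crucial simplification is that each $\nu_i$ (and each $a^k_*\nu_i$) is supported on a single fibre of $\pi$, so the conditioning collapses: $H_{\nu_i}(\cP^{(k_i)}\mid\pi^{-1}(\cB_{\bar X}))=H_{\nu_i}(\cP^{(k_i)})$. The subadditivity-plus-concavity inequality from Proposition~\ref{prop2},
\[qH_{\nu_i}(\cP^{(k_i)})\le k_iH_{\mu_i}(\cP^{(q)}\mid\pi^{-1}(\cB_{\bar X}))+2q^2\log|\cP|,\]
reduces the task to lower-bounding $H_{\nu_i}(\cP^{(k_i)})$. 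Two points $y,y_0\in S_i\cap[y_0]_{\cP^{(k_i)}}$ differ by an element of $W$, and containment in a common small atom of $\cP$ at time $k$ forces $\|b-b_0\|\lesssim r_0 e^{-nk}$ whenever $a^k y_0\notin P_\infty$. Setting $k^*(y_0)\defn\max\{k<k_i:a^k y_0\notin P_\infty\}$, the count $\#(S_i\cap[y_0]_{\cP^{(k_i)}})\lesssim e^{nm(k_i-k^*(y_0))}$ combined with the bookkeeping identity $\mathbb{E}_{\nu_i}[k_i-k^*(y)]\le\sum_{k<k_i}\nu_i(\{y:a^k y\in P_\infty\})=k_i\mu_i(P_\infty)$ gives
\[H_{\nu_i}(\cP^{(k_i)})\ge\log|S_i|-nmk_i\mu_i(P_\infty)-O(1).\]
Dividing by $k_i$, passing $i\to\infty$ using $\mu^\gamma(\partial P)=0$, and bounding $\mu^\gamma(P_\infty)\le\mu^\gamma(\overline{P_\infty})$ yields the stated inequality.

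The principal obstacles I foresee are twofold: first, producing the exact equality $\mu^\gamma(\bar Y\setminus Y)=\eta_0$ rather than just an inequality, which requires genuinely using both directions of the supremum defining $\eta_0$ and a careful diagonal extraction of the $k_i$; second, the arithmetic in the entropy step, where the ``last good return'' expectation $\mathbb{E}_{\nu_i}[k_i-k^*]$ must be converted cleanly into a cusp-mass correction so as to produce the linear term $-mn\,\mu^\gamma(\overline{P_\infty})$. The square-root refinement used in Proposition~\ref{prop2} is not needed here precisely because the single-fibre structure of $\nu_i$ makes the entropy unconditional on $\pi^{-1}(\cB_{\bar X})$.
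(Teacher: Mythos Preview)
Your proposal is correct and follows the paper's strategy closely, but you are working harder than necessary in two places because you do not fully exploit the single-fibre structure you yourself identified.

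For item~(\ref{cusp'}), the paper bypasses your Portmanteau/diagonal-extraction argument entirely: since $\pi_*\nu_i=\delta_{x_A}$ for every $i$, one has $\pi_*\mu_i=\frac{1}{k_i}\sum_{k<k_i}\delta_{a^kx_A}$, and hence $\pi_*\mu^\gamma=\mu_A$ for the weak-$*$ limit $\mu_A\in\crly P(\bar X)$ of these empirical measures. One simply chooses the sequence $k_i$ so that $\mu_A(\bar X\setminus X)=\eta_0$, and then $\mu^\gamma(\bar Y\setminus Y)=\mu_A(\bar X\setminus X)=\eta_0$ is immediate. Your approach reaches the same conclusion but with more bookkeeping.

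For item~(\ref{entropy'}), you use the fibre structure only to collapse the conditioning $H_{\nu_i}(\,\cdot\mid\pi^{-1}\cB_{\bar X})=H_{\nu_i}(\,\cdot\,)$, and then run a ``last good return'' expectation argument. The paper observes something stronger: since $P_\infty=\pi^{-1}(P_\infty^0)$ and every $y\in S_i$ projects to the same point $x_A$, the set of times $\{k<k_i:a^ky\in P_\infty\}$ is \emph{identical} for all $y\in S_i$. In particular your $k^*(y)$ is a constant, and there is a single $k\in[(1-\beta)k_i,k_i)$ with $a^ky\notin P_\infty$ simultaneously for all $y\in S_i$ (where $\beta=\mu^\gamma(P_\infty)+\rho$). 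This gives the uniform bound $\mathrm{Card}(S_i\cap[y_0]_{\cP^{(k_i)}})\le C_2e^{mn\beta k_i}$ directly, yielding $H_{\nu_i}(\cP^{(k_i)})\ge\log|S_i|-mn\beta k_i-\log C_2$ without any averaging. Your expectation inequality $\mathbb E_{\nu_i}[k_i-k^*(y)]\le k_i\mu_i(P_\infty)+O(1)$ is correct and reaches the same endpoint, but it is machinery imported from Proposition~\ref{prop2} that the saturated form of $P_\infty$ renders unnecessary here. This is also why, as you correctly note, no square-root loss appears.
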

\begin{proof}
$R^{A,T}:=\set{b\in\bR^m|\forall t\ge T,a_t x_{A,b}\in\cL_\eps}\cap\mb{Bad}_A(\eps)$. By Proposition \ref{prop1}, \\$\displaystyle\bigcup_{T=1}^\infty R^{A,T}$ has Hausdorff dimension equal to $\dim_H \mb{Bad}_A(\eps)$, thus for any $\gamma >0$, there exists $T_\gamma>0$ satisfying $\dim_H R^{A,T_\gamma}\geq \dim_H \mb{Bad}_A(\eps)-\gamma$.
Since $A$ has $\eta$-escape of mass on average for $\eta<\eta_0$, we may fix an increasing sequence of integers $\set{k_i}$ such that
$$\frac{1}{k_i}\displaystyle\sum_{k=0}^{k_i-1}\del_{a^k x_A}\wstar\mu_A\in\crly{P}(\Bar{X})$$ with $\mu_A(X)= 1-\eta_0$.
 Let $\phi_A:\bR^m\rightarrow Y$ be the (bi-Lipshitz) function defined by $\phi_A(b)=x_{A,b}$. For each $k_i\ge T_\gamma$, let $S_i$ be a maximal $e^{-k_i n}$-separated subset of $\phi_A(R^{A,T_\gamma})$ with respect to the metric $\bf{d}$. Then
$$\displaystyle\liminf_{i\to\infty}\frac{\log|S_i|}{k_i}\ge n\dim_H(R^{A,T_\gamma})\ge n(\dim_H \mb{Bad}_A(\eps)-\gamma)$$
holds from the bi-Lipschitz property between $\bf{d}$ and $||\cdot||_{\bR^m}$. \\Let $\nu_i\defn\frac{1}{|S_i|}\displaystyle\sum_{y\in S_i}\del_y$ be the normalized counting measure on $S_i$ and let $$\mu_i\defn\frac{1}{k_i}\displaystyle\sum_{k=0}^{k_i-1}a_*^k\nu_i\wstar\mu^\gamma\in\crly{P}(\bar{Y}).$$
we prove that $\mu^\gamma$ is the desired measure.\\
(\ref{supp'}) For any $y\in S_i\subseteq \phi_A(R^{A,T_\gamma})$, $a^T y\in \cL_\eps$ holds for $T>T_\gamma$. Thus
$$\mu_i(Y\setminus\cL_\eps)=\frac{1}{k_i}\displaystyle\sum_{k=0}^{k_i-1}a^k_{*}\nu_i(Y\setminus\cL_\eps)=\frac{1}{k_i}\displaystyle\sum_{k=0}^{T_\gamma}a^k_{*}\nu_i(Y\setminus\cL_\eps)\leq \frac{T_\gamma}{k_i}$$ and we obtain item (\ref{supp'}) by taking limit for $k_i\to \infty$.\\
(\ref{cusp'}) Since $\pi_*\nu_i=\del_{x_A}$ for all $i\ge 1$, $\pi_*\mu^\gamma=\mu_A$ holds. Thus $$\mu^\gamma(\Bar{Y}\setminus Y)=\mu_A(\Bar{X}\setminus X)=\eta_0.$$\\
(\ref{entropy'})
By part \ref{supp'}, $\mu^\gamma P_\infty)<1$ for small enough $P_\infty$.
Let $\rho>0$ be small enough so that $\beta\defn\mu^\gamma(P_\infty)+\rho<1$.  Then for large enough $i$, 
\begin{align*}
\beta= \mu^\gamma(P_\infty)+\rho>\mu_i(P_\infty)&=\frac{1}{k_i|S_i|}\displaystyle\sum_{y\in S_i, 0\le k<k_i}\de_{a^k y}(P_\infty)\\
&=\frac{1}{k_i}\displaystyle\sum_{0\le k<k_i}\de_{a^k x_A}(P^0_\infty)
\end{align*}
In other words, there exist at most $\beta k_i$ number of $a^k x_A$'s in $P^0_\infty$, thus there exists some $k\in [(1-\beta)k_i,k_i)$ such that 
$a^k y\in Y\setminus P_\infty$ for all $y\in S_i$.
 If $P$ is any non-empty atom of $\cP^{(k_i)}$, fixing any $y_0\in P$, any \\$y\in S_i\cap P=S_i\cap [y_0]_{\cP^{k_i}}$ satisfies
\begin{align*}
  r_0>\mathbf{d}(a^k y_0, a^k y)\ge C''e^{nk}\mathbf{d}(y_0,y)\ge C''e^{n(1-\beta)k_i}\mathbf{d}(y_0,y)  
\end{align*}
for some constant $C''>0$.
Here, we used the right invariant property of $\bf{d}$ and bi-Lipshitz property between $\bf{d}$ and $||\cdot||$. Thus $S_i\cap P$ can be covered by one ball of $C''^{-1}r_0e^{-n(1-\beta)k_i}$-radius for metric $\bf{d}$ as well as by $C_1 e^{mn\beta k_i}$ many balls of $r_0e^{-k_in}$-radius for the metric $\bf{d}$ and some constant $C_1>0$. Since $S_i$ is $e^{-k_in}$-separated with respect to $\bf{d}$, we get 
$$Card(S_i \cap [y_0]_{\cP^{(k_i)}})\leq C_1 e^{mn\beta k_i},$$
and therefore 
$$H_{\nu_i}(\cP^{(k_i)})\ge \log |S_i|-mn\beta k_i-\log C_1.$$
Now we can estimate the lower bound of the entropy. For $q\ge 1$, write the Euclidean division of large enough $k_i-1$ by $q$ as
$$k_i-1=qk'+s \ \textrm{with} \ s\in\set{0,\cdots,q-1}.$$
By subadditivity of the entropy with respect to the partition, for each $p\in\set{0,\cdots,q-1}$,
$$H_{\nu_i}(\cP^{(k_i)}|X)\leq H_{a^{p}\nu_i}(\cP^{(q)}|X)+\cdots+H_{a^{p+qk'}\nu_i}(\cP^{(q)}|X)+2q\log |\cP|.$$
Summing those inequalities for $p=0,\cdots,q-1$, and using the concave property of entropy with respect to the measure, we obtain
\begin{align*}
  qH_{\nu_i}(\cP^{(k_i)}|X)
  &\leq\displaystyle\sum_{k=0}^{k_i-1}H_{a^k \nu_i}(\cP^{(q)}|X)+2q^2\log |\cP|\\
  &\leq k_iH_{\mu_i}(\cP^{(q)}|X)+2q^2\log |\cP|
\end{align*}
and therefore
\begin{align*}
    \frac{1}{q}H_{\mu_i}(\cP^{(q)}|X)
    &\ge \frac{1}{k_i}H_{\nu_i}(\cP^{(k_i)}|X)-\frac{2q\log |\cP|}{k_i}\\
    &\ge \frac{1}{k_i}\Bigl\{(\log |S_i|-mn\beta k_i -\log C_1) - 2q\log|\cP|\Bigr\}
\end{align*}
Here, for the second inequality, we used $H_{\nu_i}(\cP^{(q)}|X)=H_{\nu_i}(\cP^{(q)})$ from the fact that $\nu_i$ is supported on an atom of $\pi^{-1}(\cB_X)$. Now we can take $i\to\infty$ because the atoms $P$ of $\overline{\cP}$ and hence of $\overline{\cP} ^{(q)}$, satisfy $\mu^\gamma(\partial P)=0$. Thus we obtain the inequality
\begin{align*}
    \frac{1}{q}H_{\mu^\gamma}(\overline{\cP}^{(q)}|\Bar{X})
    &\ge n(\dim_H \mb{Bad}_A(\eps)-\gamma)-mn\beta,
\end{align*}
and finally get the inequality
$$\frac{1}{q}H_{\mu^\gamma}(\overline{\cP}^{(q)}|\Bar{X})
    \ge n(\dim_H \mb{Bad}_A(\eps)-\gamma)-mn\mu^{\gamma}(\overline{P_\infty})$$
we desired by taking $\rho \to 0$.
\end{proof}

Now we prove Theorem \ref{thm2} and Theorem \ref{thm3}.
\begin{proof}[Proof of Theorem \ref{thm2}]
We use proof by contradiction again. Suppose that for any $\del>0$, the set of $A$ satisfying $\dim_H\mb{Bad}_A(\eps)\ge m-\del$ has full Hausdorff dimension. The set of $A$ satisfying $\dim_H \mb{Bad}_A(\eps)\ge m-\del$ has full Hausdorff dimension and for any $\eta>0$, the Hausdorff dimension of the set of $A$ with $\eta$-escape of mass on average is strictly less than full dimension by Theorem \ref{KKLM}. Thus there exists $A\in M_{m,n}(\bR)$ which does not have $\eta$-escape of mass on average and satisfies $\dim_H \mb{Bad}_A(\eps)\ge m-\del$ for any $\eta>0$.

 For $k\in\bN$, fix $\del=\eta=\frac{1}{k}$ and take a sequence of real numbers $\set{\gamma_j}$ which converges to zero. Then there exists a sequence of $a_t$-invariant probability measures $\set{\mu^{\gamma_j}\in\crly{P}(\bar{Y})}$ described in Proposition \ref{prop5}. By taking weak*-limit of this sequence, we can construct the measure $\mu_k\in\crly{P}(\Bar{Y})$ such that $\Supp\mu_k\subset\cL_\eps$, $\mu_k(\Bar{Y}\setminus Y)=\eta_0\le\eta$,  $$\frac{1}{q}H_{\mu_k}(\overline{\cP}^{(q)}|\Bar{X})\ge mn-\frac{n}{k}-mn\mu^\gamma(\overline{P_\infty})$$ for all $q\ge1$, and $\cP$ satisfying the condition (\ref{entropy'}) in Proposition \ref{prop5}. By taking weak*-limit of sequence $\set{\mu_k}$ again, eventually we obtain the measure $\mu\in\crly{P}(Y)$ supported on $\cL_\eps$ such that $\frac{1}{q}H_\mu(\cP^{(q)}|X)\ge (1-\mu(P_\infty)) mn$. By taking an appropriate partition $\cP$ to make $\mu(P_\infty)$ arbitrarily small, $h_\mu(a|X)=mn$, which is the maximal relative entropy, thus $\mu$ is a $W$-invariant measure by Proposition 3.1 of \cite{LSS}. We obtain the desired contradiction by Proposition \ref{prop4} since $\mu$ is a probability measure on $Y$ which is $a_t$-invariant, $W$-invariant, and supported on $\cL_\eps$. This proves Theorem \ref{thm2}.
\end{proof}

\begin{proof}[Proof of Theorem \ref{thm3}]
Suppose that $A\in M_{m,n}(\R)$ is not singular on average and satisfies $\dim_H\mb{Bad}_A(\eps)=m$. Let $$\eta_0=\sup\set{\eta:x_A \ \textrm{has} \ \eta\textrm{-escape of mass}}<1.$$ Take a sequence of real numbers $\set{\gamma_j}$ which converges to zero. Then there exists a sequence of $a_t$-invariant probability measures $\set{\mu^{\gamma_j}\in\crly{P}(\bar{Y})}$ described in Proposition \ref{prop5}. By taking weak*-limit of this sequence, we can construct the measure $\mu\in\crly{P}(\Bar{Y})$ such that $\Supp\mu\subset\cL_\eps\cup(\Bar{Y}\setminus Y)$,  $$\frac{1}{q}H_{\mu}(\overline{\cP}^{(q)}|\Bar{X})\ge mn(1-\mu(\overline{P_\infty}))$$ for all $q\ge1$, and $\cP$ satisfying the condition (\ref{entropy}) in Proposition \ref{prop5}. By taking the set $\overline{P^0_\infty } \subset \overline X$ smaller and smaller, we obtain $h_\mu(a|\Bar{X})\ge mn(1-\eta_0)$. 
On the other hand, the measure $\mu\in\crly{P}(\Bar{Y})$ can be represented by the linear combination $\mu=(1-\eta_0)\mu_0+\eta_0\del_\infty$, where $\del_\infty$ is the dirac delta measure on $\Bar{Y}\setminus Y$ and $\mu_0\in\crly{P}(Y)$. Since $s=\mu(\Bar{Y}\setminus Y)\le \eta_0$ and the entropy is a linear function with respect to the measure, 
$h_{\mu_0}(a|X)= \frac{1}{1-\eta_0}h_{\mu}(a|X)=mn$, which is the maximal relative entropy, thus $\mu_0$ is a $W$-invariant measure by Proposition 3.1 of \cite{LSS}. Proposition \ref{prop4} implies that $\mu_0$ is a probability measure on $Y$ which is $a_t$-invariant, $W$-invariant, thus containing full fibers, and supported on $\cL_\eps$. However a translation of a lattice by small $w$ cannot avoid an $\epsilon$-ball, thus we obtain the desired contradiction.
\end{proof}

\section*{Acknowledgement}   
SL is an associate member of KIAS. This project is supported by Samsung Science and Technology Foundation under Project No. SSTF-BA1601-03.

\def\cprime{$'$} \def\cprime{$'$} \def\cprime{$'$}
\providecommand{\bysame}{\leavevmode\hbox to3em{\hrulefill}\thinspace}
\providecommand{\MR}{\relax\ifhmode\unskip\space\fi MR }
\providecommand{\MRhref}[2]{%
  \href{http://www.ams.org/mathscinet-getitem?mr=#1}{#2}
}
\providecommand{\href}[2]{#2}

\end{document}